\documentclass[a4paper, 12pt, leqno]{amsart}

\usepackage[dvipsnames]{xcolor}

\usepackage{iftex}
\ifPDFTeX % PDFLaTeX
  \usepackage[utf8]{inputenc}
  \usepackage[T1]{fontenc}
\else % LuaLaTeX & XeLaTeX
  \usepackage{fontspec}
\fi
\usepackage{lmodern}

\usepackage{geometry}
\usepackage[textwidth=28mm]{todonotes}
\usepackage{mathtools}
\usepackage{yfonts}
\usepackage[foot]{amsaddr}

\usepackage[pdfencoding=auto]{hyperref}
\hypersetup{
    colorlinks,
    citecolor=green,
    filecolor=black,
    linkcolor=blue,
    urlcolor=black
}

\usepackage[english]{babel}

\usepackage{dsfont}
\usepackage{soul}
\usepackage{comment}
\usepackage{cases}
\usepackage{enumerate}
\usepackage[shortlabels]{enumitem}
\usepackage{tikz}
\usepackage{pgfplots}
\pgfplotsset{compat=1.15}
\usetikzlibrary{arrows}
\usetikzlibrary{patterns}
\usepackage{amsmath,amsfonts,amssymb,amsthm}
\numberwithin{equation}{section}

\usepackage{tabularx}

\newcommand{\nocontentsline}[3]{}
\let\origcontentsline\addcontentsline
\newcommand\stoptoc{\let\addcontentsline\nocontentsline}
\newcommand\resumetoc{\let\addcontentsline\origcontentsline}

\theoremstyle{plain}
\newtheorem{thm}{Theorem}[section]

\newtheorem{lem}[thm]{Lemma} 
\newtheorem{rem}[thm]{Remark} 

\newtheorem{prop}[thm]{Proposition} 
\theoremstyle{definition} 
\newtheorem{defi}{Definition}[section] 
\theoremstyle{remark}

\newcommand{\R}{\mathbb{R}}

\newcommand{\x}{x}

\newcommand{\uv}{v}
\newcommand{\f}{\mathbf{f}}

\newcommand{\eps}{\varepsilon}

\makeatletter
\newcommand{\leqnomode}{\tagsleft@true}
\newcommand{\reqnomode}{\tagsleft@false}
\makeatother

\usepackage[normalem]{ulem}

\definecolor{gemgreen}{RGB}{0,150,0} % Custom green tone for minor typos
\definecolor{revblue}{RGB}{0,80,200} % Correction blue

\author{Gabriella Puppo}
\author{Thomas Rey}
\author{Tommaso Tenna}
\address[Gabriella Puppo]{Dipartimento di Matematica, Sapienza Università di Roma, P.le Aldo Moro 5, 00185 Rome, Italy}
\address[Thomas Rey]{Université Côte d’Azur, CNRS, LJAD, Parc Valrose, F-06108 Nice, France}
\address[Tommaso Tenna]{Université Côte d’Azur, CNRS, LJAD, Parc Valrose, F-06108 Nice, France -- Dipartimento di Matematica, Sapienza Università di Roma, P.le Aldo Moro 5, 00185 Rome, Italy}

\title[]{Hydrodynamic limit of a viscous BGK model of the Boltzmann equation}
\date{}

\begin{document}

\begin{abstract}
    
    In this paper, we investigate a viscous regularization for the BGK model of the Boltzmann equation and its hydrodynamic limit. First, we establish rigorous convergence toward the viscous Euler equations, by exploiting bounds derived from spatial Fisher information. We then analyze the formal derivation of the Euler equations with an entropy inequality for both single- and multi-species case, comparing our results against symmetric hyperbolic thermodynamically compatible theory. 
    \medskip
    
    \textsc{2020 Mathematics Subject Classification:} 82B40, %Kinetic theory of gases
    76P05, %Rarefied gas flows, Boltzmann equation
    35D40. %Viscosity solutions to PDEs
\end{abstract}

\keywords{Hydrodynamic limit; Boltzmann equation; BGK equation; artificial viscosity.}

\maketitle

\vspace{-1cm}
% \tableofcontents

\section{Introduction}

The main goal of this work is providing some insights about the hydrodynamic limit of a viscous BGK model of the Boltzmann equation \cite{BGK1954}. Several aspects motivate the choice of considering a viscous version of the BGK model.\\
The first aspect concerns the role of the spatial diffusion, which offers several technical advantages, providing ``\textit{smoothing}'' properties of the operator. This is strictly connected to the vanishing viscosity approach introduced for hyperbolic conservation laws, see \cite{bianchinibressan2005} and references therein. The regularity introduced by the diffusion term provides control on all macroscopic moments of interest. In classical BGK, the absence of spatial regularity on the distribution function implies that the only compactness estimates for moments come from velocity averaging lemmas, which yield only fractional Sobolev regularity, see \cite{golse1988, golse2005_parma}. Moreover, the diffusion operator acts as regularizing term, giving compactness in the phase space, without considering renormalized solutions. Such properties allow for a rigorous analysis of the limit to the viscous Euler equations, exploiting the uniform estimates obtained through the entropy inequality.\\

The second aspect regards the investigation of the formal hydrodynamic limit towards the viscous Euler equations provided with an entropy inequality. This framework appears in symmetric hyperbolic and thermodynamically compatible (SHTC) theory \cite{ruggeri1981, thomann2023}, where the equation for the energy is an extra conservation law, replaced by the entropy production equation. Due to this choice, energy conservation is ensured by imposing conditions on the entropy production term. In this work, we will show that the viscous Euler equations with the entropy inequality can be obtained as limit of the viscous BGK equation without any further assumption. Indeed, since the viscous BGK equation preserves energy, the entropy production term obtained at the macroscopic level is automatically consistent with such conservation law.

Let $f^\eps: \R^+ \times \R^d \times \R^d \to \R^d$ be a non-negative distribution function, solution to the Cauchy problem 
\begin{equation}
\label{eq:BGK_viscous}
    \begin{cases}
        \partial_t f^\eps + v \cdot \nabla_x f^\eps = \displaystyle \frac{1}{\eps} \left( \mathcal{M}[f^\eps] - f^\eps \right) + \tau \Delta_x f^\eps,\\
        f^\eps(0,x,v) = f^\eps_0(x,v),
    \end{cases}
\end{equation}
for $\tau>0$ a real coefficient. This corresponds to a Boltzmann-type equation with an ``artificial spatial viscosity'' term. From a physical perspective, this Laplacian can be interpreted as an uncertainty in the spatial position of the particles governed by a Brownian distribution.\\
The parameter $\eps >0$ is the \textit{Knudsen number}, defined as the ratio between the mean free path of particles and the physical length scale of observation. This quantity governs the frequency of collisions: for $\eps \ll 1$ the evolution of the particle system is governed by collisions and the gas is \textit{dense}, while, if $\eps \sim 1$ the evolution of the particle system is governed by the advection term and the gas can be considered \textit{rarefied}, see \cite{cercignani1988} and references therein.\\
The collision operator in the BGK equation is given by $\mathcal{Q}(f) = (\mathcal{M}[f]-f)$, where $\mathcal{M}$ is a Maxwellian function, generally depending on the macroscopic moments \cite{BGK1954, perthame1989}. This can be explicitly written as
\begin{equation}
\label{eq::Maxwellian}
\mathcal{M}[f](t,x,v) := \frac{\rho}{(2 \pi T)^{d/2}} \exp \left ( - \frac{|v- u|^2}{2 T} \right ),
\end{equation}
where the \textit{density}, \textit{velocity} and	\textit{temperature} of the gas $\rho$, $u$ and $T$  are computed as suitable moments of the distribution function $f$,  
\begin{equation}
\label{eq:macroscopic_variables}
\rho = \int_{\R^d}f(v)\,dv, \quad u = \frac{1}{\rho}\int_{\R^d}v f(v) \, dv, 
\quad T = \frac{1}{d \,\rho} \int_{\R^d} \vert u - v \vert^2 f(v) \,dv.
\end{equation}
By construction, the collision operator preserves mass, momentum and kinetic energy. Indeed, it holds
  \begin{equation*}
    \int_{\R^{d}} \mathcal{Q}(f)(v) \, dv = 0, \quad  \int_{\R^{d}} \mathcal{Q}(f)(v) \, v \, dv = 0, \quad \int_{\R^{d}} \mathcal{Q}(f)(v) \, |v|^2 \, dv = 0.
  \end{equation*}

In the following, we will assume the initial condition $f_0$ has finite second spatial moment,
\begin{equation} \label{eq:rapidly_decaying}
    \int_{\mathbb{R}^d \times \mathbb{R}^d} f_0(x,v) |x|^2 \, dx  dv < \infty,
\end{equation}
which enforces a rapid spatial decay for the solution $f$ as $|x| \to \infty$. This decaying behavior is rigorously propagated in time, ensuring that the spatial moment of the solution $f(t,x,v)$ remains bounded for all $t > 0$, thereby guaranteeing that the solution $f(t,x,v) \to 0$ as $|x| \to \infty$.

\begin{rem}
While a diffusion term in velocity $\Delta_v f$ can appear in kinetic theory, often modeling a \textit{thermal bath} \cite{bobylev2002, gamba2004, rey2013, tristani2016}, the inclusion of spatial diffusion directly into the kinetic transport equation - similar to the diffusive Boltzmann models studied by Abramov \cite{abramov2017} - creates a mathematically distinct phase-space coupling. 
\end{rem}

\section{Convergence to the Viscous Euler Equations}
\label{Section:Convergence_Rigorous}

This section is devoted to the proof of convergence of the sequence of macroscopic moments
$(\rho^\eps, u^\eps, T^\eps)$ associated with the solution $f^\eps$ of the viscous BGK model
\eqref{eq:BGK_viscous} towards a solution of the compressible Euler equations with viscous
regularization, namely
\begin{equation}
\label{eq:Euler_viscous}
    \begin{cases}
        \partial_t \rho + \nabla_x \cdot (\rho u) = \tau \Delta_x \rho,\\
        \partial_t (\rho u) + \nabla_x \cdot (\rho u \otimes u + \mathrm{p}\,\mathbb{I})
            = \tau \Delta_x (\rho u),\\
        \partial_t (\rho E) + \nabla_x \cdot (u(\rho E + \mathrm{p}))
            = \tau \Delta_x (\rho E).
    \end{cases}
\end{equation}

The viscous regularization of the compressible Euler equations has been widely studied to justify the selection of physically admissible shock waves of the inviscid version (\textit{i.e.} $\tau = 0$) and to establish the well-posedness of entropy solutions. In the pioneering work of DiPerna \cite{diperna1982}, rigorous convergence of a viscous approximation of the 1D isentropic Euler equations to the correct inviscid limit has been proved. In \cite{bianchinibressan2005}, Bianchini and Bressan proved that for 1D strictly hyperbolic systems with small total variation, the vanishing viscosity method converges to the unique entropy-admissible solution, finally validating the concept of viscous regularization. In other words, in the one dimensional case, this viscous regularization is necessary to suppress non-physical oscillations and guarantee convergence to the unique physically admissible entropy solutions of the inviscid Euler equation for $\tau \to 0$. The main idea is that the viscosity solutions should coincide precisely with the limits of vanishing viscosity approximations, see also \cite{dafermos2016} and references therein.

In this section, we will prove convergence for a fixed $\tau >0$. The argument is structured as follows. We first derive the entropy dissipation inequality in Proposition \ref{prop:H_Theorem}, since it is a formal identity that holds for any sufficiently regular solution and its proof requires no compactness theory. The resulting a priori bounds are then used both to establish global existence in Proposition \ref{prop:existence_BGK} and to carry out the compactness argument that identifies the limit in Theorem \ref{thm:convergence_viscous_Euler}.

\subsection{Entropy Dissipation Inequality}

\begin{defi}[Boltzmann Entropy]
Let $f$ be a solution to the viscous BGK equation \eqref{eq:BGK_viscous}. The \textit{Boltzmann
$H$-functional} is defined as
\begin{equation}\label{def:H_functional}
    \mathcal{H}(f) = \int_{\R^d\times\R^d} f\log f\, dx\, dv.
\end{equation}
\end{defi}

\begin{prop}
\label{prop:H_Theorem}
Let $f^\eps$ be a sufficiently regular non-negative solution of the viscous BGK equation
\eqref{eq:BGK_viscous} satisfying \eqref{eq:rapidly_decaying}. Then it holds
\begin{equation}
\label{eq:entropy_inequality}
    \mathcal{H}(f^\eps(t))
    + \frac{1}{\eps}\int_0^t \mathcal{D}_{\mathrm{BGK}}(s)\, ds
    + 4\tau\int_0^t\!\!\int_{\R^d\times\R^d} |\nabla_x\sqrt{f^\eps(s)}|^2\, dx\, dv\, ds
    \le \mathcal{H}(f_0^\eps),
\end{equation}
where the BGK entropy dissipation is
\begin{equation}
\label{eq:D_BGK}
    \mathcal{D}_{\mathrm{BGK}}(t)
    := \int_{\R^d\times\R^d}
        (f^\eps - \mathcal{M}[f^\eps])\log\!\left(\frac{f^\eps}{\mathcal{M}[f^\eps]}\right)
        dx\, dv \ge 0.
\end{equation}
\end{prop}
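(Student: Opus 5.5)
We multiply \eqref{eq:BGK_viscous} by $\log f^\eps + 1$ and integrate over $\R^d\times\R^d$. Each of the four terms is then handled separately: the time derivative, the transport term, the collision term and the spatial Laplacian. Throughout, $f^\eps$ is assumed regular and decaying, so that boundary terms at $|x|,|v|\to\infty$ vanish. This decay in $x$ is what \eqref{eq:rapidly_decaying} provides, while decay in $v$ follows from finite energy.

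The first two terms are routine. For the time derivative, $\partial_t f\,(\log f+1) = \partial_t(f\log f)$, so it yields $\frac{d}{dt}\mathcal{H}(f^\eps)$. For the transport term, $v\cdot\nabla_x f\,(\log f+1) = \nabla_x\cdot(v\, f\log f)$, and its integral vanishes.

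For the collision term we write
\[
\int (\mathcal{M}-f)(\log f+1) = \int(\mathcal{M}-f)\log f .
\]
This holds because $\int(\mathcal{M}-f)\,dv=0$ by conservation of mass. Next we use that $\log\mathcal{M}[f]$ is a linear combination of $1$, $v$, $|v|^2$ with coefficients depending on $(t,x)$. The conservation properties of $\mathcal{Q}$ listed after \eqref{eq:macroscopic_variables} then give $\int(\mathcal{M}-f)\log\mathcal{M}\,dv=0$. Subtracting this zero term, we obtain
\[
\int(\mathcal{M}-f)\log f = -\int (f-\mathcal{M})\log\frac{f}{\mathcal{M}} = -\mathcal{D}_{\mathrm{BGK}}.
\]
Nonnegativity of $\mathcal{D}_{\mathrm{BGK}}$ is pointwise: $(a-b)(\log a-\log b)\ge 0$ by monotonicity of the logarithm. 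Here one needs $T>0$ so that $\mathcal{M}$ is well defined; this holds wherever $\rho>0$.

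For the viscous term, we integrate by parts in $x$:
\[
\tau\int \Delta_x f\,(\log f+1) = -\tau\int \frac{|\nabla_x f|^2}{f} = -4\tau\int|\nabla_x\sqrt f|^2 .
\]
This uses the identity $|\nabla_x f|^2/f = 4|\nabla_x \sqrt f|^2$.

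Collecting the four terms gives the identity
\[
\frac{d}{dt}\mathcal{H}(f^\eps) + \frac1\eps\mathcal{D}_{\mathrm{BGK}} + 4\tau\int|\nabla_x\sqrt{f^\eps}|^2 = 0 .
\]
Integrating in time on $[0,t]$ yields \eqref{eq:entropy_inequality}, in fact with equality for smooth positive solutions. The inequality form is what survives for the weak solutions constructed later, via lower semicontinuity of the convex dissipation functionals.

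The main obstacle is justifying these manipulations. First, $\mathcal{H}$ must be well defined, since $f\log f$ may be negative. For this I would use the standard bound
\[
f|\log f|\le f\log f + 2f(|x|^2+|v|^2)/C + Ce^{-(|x|^2+|v|^2)/C},
\]
which is controlled using \eqref{eq:rapidly_decaying} and finite energy. This in turn requires propagating the second moment in $x$. To do so, compute $\frac{d}{dt}\int f|x|^2 = \int 2x\cdot v f + 2d\tau\int f$ and close the estimate with Cauchy--Schwarz. Second, the integrations by parts involve $\log f$, which is singular where $f=0$. I would handle this by working with $f+\delta$, or by testing with $\log(f+\delta)$, and letting $\delta\to0$ using Fatou's lemma. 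This limiting step is precisely why the statement is an inequality rather than an equality.
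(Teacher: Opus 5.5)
Your proposal is correct and follows the paper's proof essentially step for step. You test against $1+\log f^\eps$ and kill the transport term as a divergence. You rewrite the relaxation term as $-\mathcal{D}_{\mathrm{BGK}}/\eps$, using that $\log\mathcal{M}[f^\eps]$ is a combination of the collision invariants. You integrate the Laplacian by parts to obtain the spatial Fisher information. Your closing remarks on rigor go beyond the paper's purely formal computation. These are the well-definedness of $\mathcal{H}$, the propagation of $\int f^\eps|x|^2$, and regularizing $\log f^\eps$. The paper treats the entropy lower bound and the spatial moment separately, in its subsequent a priori estimates.
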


\begin{proof}
Multiplying \eqref{eq:BGK_viscous} by $(1 + \log f^\eps)$ and integrating over
$(x,v) \in \R^d\times\R^d$, we obtain 
\begin{multline*}
    \int_{\R^d\times\R^d} (\partial_t f^\eps)(1 + \log f^\eps) +  (v\cdot\nabla_x f^\eps)(1 + \log f^\eps)\, dx\, dv \\= \frac{1}{\eps}\int_{\R^d\times\R^d}(\mathcal{M}[f^\eps]-f^\eps)(1+\log f^\eps)\, dx\, dv + \tau\int_{\R^d\times\R^d} (\Delta_x f^\eps)(1+\log f^\eps)\, dx\, dv,
\end{multline*}
where we treat each term separately.

\medskip
\noindent\textit{Transport term.}
Since $v\cdot\nabla_x(f\log f) = v\cdot\nabla_x f\,(1 + \log f)$,
\begin{equation*}
    \int_{\R^d\times\R^d} (v\cdot\nabla_x f^\eps)(1 + \log f^\eps)\, dx\, dv
    = \int_{\R^d\times\R^d} \nabla_x\cdot(v\, f^\eps\log f^\eps)\, dx\, dv = 0,
\end{equation*}
which vanishes by the divergence theorem under rapidly decaying behavior \eqref{eq:rapidly_decaying}.

\medskip
\noindent\textit{Viscosity term.}
Integrating by parts in $x$ and recalling that boundary terms vanish as above,
\begin{multline*}
    -\tau\int_{\R^d\times\R^d} (\Delta_x f^\eps)(1+\log f^\eps)\, dx\, dv
    = \tau\int_{\R^d\times\R^d} \nabla_x f^\eps\cdot\nabla_x\log f^\eps\, dx\, dv
    \\ = \tau\int_{\R^d\times\R^d} \frac{|\nabla_x f^\eps|^2}{f^\eps}\, dx\, dv.
\end{multline*}
Using the identity $\nabla_x f^\eps = 2\sqrt{f^\eps}\,\nabla_x\sqrt{f^\eps}$, we rewrite this as
\begin{equation}
\label{eq:fisher_information}
    \tau\int_{\R^d\times\R^d} \frac{|\nabla_x f^\eps|^2}{f^\eps}\, dx\, dv
    = 4\tau\int_{\R^d\times\R^d} |\nabla_x\sqrt{f^\eps}|^2\, dx\, dv \ge 0.
\end{equation}
This will be referred to as the \textit{spatial Fisher information} of $f^\eps$. It is non-negative and represents the \textit{a priori} bound provided by the viscous term $\tau\Delta_x f^\eps$.

\medskip
\noindent\textit{Relaxation term.}
We decompose $\log f^\eps = \log(f^\eps/\mathcal{M}[f^\eps]) + \log\mathcal{M}[f^\eps]$ and write
\begin{align*}
    &\frac{1}{\eps}\int_{\R^d\times\R^d}(\mathcal{M}[f^\eps]-f^\eps)(1+\log f^\eps)\, dx\, dv \\
    &\quad = -\frac{1}{\eps}\int_{\R^d\times\R^d}
        (f^\eps - \mathcal{M}[f^\eps])\log\!\frac{f^\eps}{\mathcal{M}[f^\eps]}\, dx\, dv \leq 0,
\end{align*}
where the equality holds since $\log\mathcal{M}[f^\eps]$ is a linear combination of the collision invariants $\phi(v) \in \{1, v, |v|^2\}$. Since $f^\eps$ and $\mathcal{M}[f^\eps]$ share the same macroscopic moments, we have $\int_{\R^d} (\mathcal{M}[f^\eps] - f^\eps)\phi(v)\, dv = 0$ for any collision invariant. The remaining integral defines $-\mathcal{D}_{\mathrm{BGK}}/\eps$, which is non-positive due to the classical elementary inequality $(a - b)\log(a/b) \ge 0$ for all $a, b > 0$.

\medskip
Collecting the three contributions and integrating in time yields \eqref{eq:entropy_inequality}.
\end{proof}

\subsection{Uniform A Priori Estimates}
\label{subsec:apriori}

The entropy inequality \eqref{eq:entropy_inequality} contains three non-negative terms on the
left-hand side: the entropy $\mathcal{H}(f^\eps(t))$ at time $t$, the relaxation dissipation term $\mathcal{D}_{\text{BGK}}$ and the spatial Fisher information, both integrated in time. To extract uniform bounds, we must assume that the initial condition $f_0^\eps$ satisfies
\begin{equation*}
    \int f_0^\eps(1 + |v|^2 + |x|^2 + |\log f_0^\eps|)\, dx\, dv < \infty.
\end{equation*}

\subsubsection{Control of the Entropy Lower Bound}
As done in \cite[Proposition 5.1]{alonso2026}, we can use the generalized Young's inequality 
\begin{equation*}
    xy \le x \log x - x + e^y, \qquad x \geq 0, \, y \in \R
\end{equation*} 
with $x=z$ and $y = -|v|^2/2 -|x|^2/2 + \log(\kappa/2)$ to obtain
\begin{equation*}
    z|\log z| \le z\log z + C_\kappa(z + z|v|^2 + z|x|^2) + \kappa\, e^{-|v|^2/2-|x|^2/2},
    \qquad \forall\, z \ge 0,\quad \kappa > 0.
\end{equation*}
Since the non-negativity of the solution $f^\eps \ge 0$ is strictly preserved by the evolution, as we will rigorously establish in Proposition \ref{prop:existence_BGK}, $\log f^\eps$ is well-defined. Integrating over the phase space, assuming finite mass $M_0$, finite energy $E_0$ and finite second order spatial moment $I_0$, we formally obtain
\begin{equation}
\label{eq:H_lower_bound}
    0 \leq \int_{\R^d\times\R^d} f^\eps|\log f^\eps|\, dx\, dv
    \le \mathcal{H}(f^\eps) + C(M_0, E_0, I_0, T),
\end{equation}
which implies
\begin{equation}
\label{eq:H_bounded_below}
    \mathcal{H}(f^\eps(t)) \ge -C(M_0, E_0, I_0, T).
\end{equation}
The constant $C(M_0, E_0, I_0, T)$ can be explicitly determined integrating the inequality 
\begin{equation*}
    f^\eps |\log f^\eps| \le f^\eps \log f^\eps + C_\kappa(f^\eps + f^\eps |v|^2 + f^\eps |x|^2) + \kappa\, e^{-|v|^2/2-|x|^2/2}
\end{equation*}
over the phase space $\mathbb{R}^d \times \mathbb{R}^d$, and using the total mass $M_0 = \iint f_0^\eps \,dx\,dv$ (constant in time), the total energy $E_0 = \frac{1}{2}\iint f_0^\eps |v|^2 \,dx\,dv$ (constant in time) and the total second order spatial moment $I = \frac{1}{2}\iint f^\eps |x|^2 \,dx\,dv$. Applying Grönwall's lemma, we get
\begin{equation*}
    C(M_0, E_0, I_0, T) = C_\kappa \Big(M_0 + 2E_0 + 2 \sup_{t \in [0,T]} I(t)\Big) + \kappa (2\pi)^d.
\end{equation*}
This time-dependent bound preserves \eqref{eq:H_bounded_below} globally in time.

\subsubsection{Uniform Bounds}

Combining \eqref{eq:entropy_inequality} with \eqref{eq:H_bounded_below}, we obtain the following
bounds, uniform in $\eps$.
\begin{enumerate}
    \item \textit{Mass and energy.} Integrating \eqref{eq:BGK_viscous} against $1$ and $|v|^2/2$
    and using that both are collision invariants together with the rapidly decaying boundary conditions \eqref{eq:rapidly_decaying} in $x$,
    \begin{equation}
    \label{eq:mass_energy_conservation}
        \frac{d}{dt}\int f^\eps\, dx\, dv = 0, \qquad
        \frac{d}{dt}\int f^\eps \frac{|v|^2}{2}\, dx\, dv = 0.
    \end{equation}

    \item \textit{Entropy.} From \eqref{eq:entropy_inequality} and \eqref{eq:H_bounded_below},
    \begin{equation}
    \label{eq:entropy_bound}
        \sup_{t\in[0,T]}\mathcal{H}(f^\eps(t)) \le \mathcal{H}(f_0^\eps) < \infty,
    \end{equation}
    and by \eqref{eq:H_lower_bound}, $\sup_t\int f^\eps|\log f^\eps|\, dx\, dv$ is uniformly
    bounded. This gives equi-integrability property and by the Dunford-Pettis Theorem \cite[Theorem 27-II]{dellacherie1978}, the sequence $\{f^\eps\}$ is weakly relatively compact in $L^1_\mathrm{loc}$.

    \item \textit{Fisher information.} From \eqref{eq:entropy_inequality},
    \begin{equation}
    \label{eq:fisher_bound}
        4\tau\int_0^T\!\!\int_{\R^d\times\R^d}|\nabla_x\sqrt{f^\eps}|^2\, dx\, dv\, dt
        \le \mathcal{H}(f_0^\eps) + C(M_0, E_0) < \infty,
    \end{equation}
    under the assumption of initial condition having finite moments and finite entropy.
\end{enumerate}

\subsection{Propagation of Moments}

We establish that high-order velocity moments in $L^1$ remain bounded uniformly in $\eps > 0$ over $t \in [0,T]$.

\begin{prop} \label{prop:propagation_moments}
Assume that the initial distribution satisfies $|v|^q f_0^\eps \in L^1(\mathbb{R}^d \times \mathbb{R}^d)$ for $q \ge 2$ uniformly in $\eps$. Then, any non-negative solution $f^\eps$ to \eqref{eq:BGK_viscous} satisfies
\begin{equation*}
    \sup_{\eps > 0} \, \sup_{0 \le t \le T} \| |v|^q f^\eps(t, \cdot, \cdot) \|_{L^1(\mathbb{R}^d \times \mathbb{R}^d)} \le C_T,
\end{equation*}
where $C_T$ depends only on the initial bounds and $T$, independently of $\eps$.
\end{prop}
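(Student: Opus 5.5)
We multiply \eqref{eq:BGK_viscous} by $|v|^q$ and integrate over the phase space. The transport term $v\cdot\nabla_x f^\eps$ integrates to zero, and so does the viscous term $\tau\Delta_x f^\eps$, because $|v|^q$ does not depend on $x$. Both cancellations use the spatial decay guaranteed by \eqref{eq:rapidly_decaying}. Writing $m_q(t) := \iint |v|^q f^\eps\,dx\,dv$, we are left with
\begin{equation*}
    \frac{d}{dt} m_q(t) = \frac{1}{\eps}\left( \iint |v|^q \mathcal{M}[f^\eps]\,dx\,dv - m_q(t)\right).
\end{equation*}
The whole proof therefore reduces to bounding the $q$-th moment of the local Maxwellian by the $q$-th moment of $f^\eps$ itself. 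No smallness in $\eps$ is needed, since the relaxation has a sign.

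The key step, and the one I expect to be the main obstacle, is the pointwise-in-$x$ estimate
\begin{equation*}
    \int_{\R^d} |v|^q \mathcal{M}[f^\eps]\,dv \le C_{q,d} \int_{\R^d}|v|^q f^\eps\,dv,
\end{equation*}
with $C_{q,d}$ independent of $\eps$ and of the macroscopic fields.

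To prove it, I write $v = u + \sqrt{T}\,w$ with $w$ standard Gaussian. This gives
\begin{equation*}
    \int |v|^q\mathcal{M}\,dv \le C_q\,\rho\,\bigl(|u|^q + T^{q/2}\bigr).
\end{equation*}
I then control both terms by Jensen's inequality with respect to the probability measure $f\,dv/\rho$. For the velocity, $|u|^q \le \rho^{-1}\int |v|^q f\,dv$. For the temperature, $T^{q/2} = \bigl(\tfrac{1}{d\rho}\int |v-u|^2 f\,dv\bigr)^{q/2} \le C\rho^{-1}\int |v-u|^q f\,dv$, which is valid because $q\ge 2$. Finally $|v-u|^q \le 2^{q-1}(|v|^q+|u|^q)$, and the $|u|^q$ term is absorbed using the previous bound.

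This bound is the place where the argument may need adjustment. A constant $C_{q,d}\le 1$ would give $\frac{d}{dt}m_q\le 0$, but this fails for large $q$. For $q=2$ the moment is in fact exactly conserved by \eqref{eq:mass_energy_conservation}. For general $q$ I only obtain $\frac{d}{dt}m_q \le \frac{C-1}{\eps} m_q$, and Grönwall's lemma applied to this produces an $\eps$-dependent, exploding constant.

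To avoid the blow-up I would instead argue by a Duhamel/convexity representation, which is the standard route for BGK. The semigroup $S(t)$ of $\partial_t + v\cdot\nabla_x - \tau\Delta_x$ preserves the $L^1$ norm of $|v|^q f$, being transport composed with the heat kernel in $x$ at fixed $v$. Thus
\begin{equation*}
    f^\eps(t) = e^{-t/\eps}S(t)f_0 + \frac{1}{\eps}\int_0^t e^{-(t-s)/\eps}S(t-s)\mathcal{M}[f^\eps(s)]\,ds,
\end{equation*}
and $m_q(t)$ is a convex combination of $m_q(0)$ and the moments of the Maxwellians.

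The sharper fact I would then try to prove is that the Maxwellian moment $\int |v|^q\mathcal{M}\,dv$ is controlled not by $m_q$ but by the lower quantities $\rho$, $\rho u$ and $\rho T$. That is, $\rho(|u|^q+T^{q/2}) \le C\rho^{1-q/2}(\rho|u|^2+d\rho T)^{q/2}$, which involves the energy density only. To make this integrable in $x$, I would interpolate between the $L^1$ energy bound and a higher moment with exponent strictly less than one. Alternatively, as in Perthame's classical approach, I would use the entropy bound \eqref{eq:entropy_bound} to control $\rho$ in some $L^p$, and close with a sublinear Grönwall inequality whose constants do not depend on $\eps$.

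If this sharper route fails, my fallback is to prove the statement for $q$ with $C_{q,d}\le 1$ in the pointwise estimate, and extend to larger $q$ via the interpolation.
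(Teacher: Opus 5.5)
Your preliminary steps are correct and coincide with the paper's proof. These are the identity $\frac{d}{dt}Y_q=\eps^{-1}(M_q-Y_q)$, the pointwise bound $\int|v|^q\mathcal{M}[f^\eps]\,dv\le C_q\rho^\eps\bigl(|u^\eps|^q+(T^\eps)^{q/2}\bigr)$ (the paper cites it, and your Jensen proof is fine), and the Duhamel representation $Y_q(t)=e^{-t/\eps}Y_q(0)+\int_0^t\eps^{-1}e^{-(t-s)/\eps}M_q(s)\,ds$. But the proposal stops exactly where the proof has to be closed: you never establish an $\eps$-independent bound on $M_q(s)=\iint|v|^q\mathcal{M}[f^\eps(s)]\,dx\,dv$. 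The paper closes by asserting, via ``moment interpolation'', that $M_q(s)\le C_\theta+\theta Y_q(s)$ with $\theta\in(0,1)$. Your ``interpolation with exponent strictly less than one'' is the same claim in sublinear form, and your entropy route and fallback are only sketched. As it stands, the one nontrivial step is missing.

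Moreover, that step cannot be obtained as a static inequality in terms of mass, energy, entropy and moments. Take $f=m\,\phi(x)\,\sigma_r(v)$, with $\phi\ge0$ a fixed compactly supported bump of unit mass and $\sigma_r$ the normalized uniform density on the shell $\{r\le|v|\le r+1\}$. As $r\to\infty$ one has $u=0$, $T\simeq r^2/d$, $Y_q\simeq m r^q$ and $M_q\simeq c_{q,d}\,m r^q$. Here $c_{q,d}=\mathbb{E}\bigl[(|Z|^2/d)^{q/2}\bigr]$ with $Z$ standard Gaussian in $\R^d$, and by strict Jensen $c_{q,d}>1$ for every $q>2$ (e.g.\ $c_{4,d}=(d+2)/d$). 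Choose $m=r^{-2}$. Then mass, energy, $\iint f|\log f|$ and the $|x|^2$-moment stay bounded (add a fixed background supported away from $\mathrm{supp}\,\phi$ if you want the mass fixed). Meanwhile $Y_q\to\infty$ and $M_q-\theta Y_q\to\infty$ for every $\theta<c_{q,d}$. So no estimate $M_q\le C+\theta Y_q$ with $\theta<1$, nor $M_q\le C\,Y_q^{\alpha}$ with $\alpha<1$, can follow from these quantities.

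The bad regime is small $\rho$ with large $e/\rho$, i.e.\ the negative power of $\rho$ in your bound $\rho^{1-q/2}e^{q/2}$ (with $e=\rho|u|^2+d\rho T$). The entropy bound only controls large values of $\rho$, so it does not help here. Interpolating against a higher moment gives $M_q\le C\,M_0^{1-q/q'}Y_{q'}^{q/q'}$, which only pushes the problem up the hierarchy. The same example shows that the sharp constant in your pointwise estimate exceeds $1$ for every $q>2$, not only for large $q$. So your fallback covers only $q=2$, which is energy conservation, and interpolation cannot extrapolate upward. For the same reason the relaxation term $M_q-Y_q$ has no sign for $q>2$.

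What is genuinely missing is a dynamical bound on $\sup_{s\le T}M_q(s)$. Your Duhamel formula gives $Y_q(t)\le Y_q(0)+\sup_{s\le t}M_q(s)$. Moreover $M_q(s)=\int\Phi(\rho^\eps,\rho^\eps u^\eps,\rho^\eps E^\eps)(s,x)\,dx$, where $\Phi(U)$ is the $q$-th velocity moment of the Maxwellian with moments $U$. Since these densities satisfy the $\eps$-free system \eqref{eq:macro_system}, such a bound would have to come from estimates on that system, not from a pointwise or interpolation inequality. The paper's interpolation step does not supply it either, for the reason above.
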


\begin{proof}
Let $Y_q(t)$ denote the $L^1$ velocity moment of order $q$, namely
\begin{equation*}
    Y_q(t):= \int_{\mathbb{R}^d \times \mathbb{R}^d} f^\eps(t,x,v) |v|^q \,dx\,dv
\end{equation*}
Multiplying \eqref{eq:BGK_viscous} by $|v|^q$ and integrating over phase space, the transport and spatial diffusion terms vanish using \eqref{eq:rapidly_decaying}
\begin{equation*}
    \frac{d}{dt} Y_q(t) + \frac{1}{\eps} Y_q(t) = \frac{1}{\eps} M_q(t),
\end{equation*}
where $M_q(t) := \int_{\mathbb{R}^d \times \mathbb{R}^d} \mathcal{M}[f^\eps(t,x,v)] |v|^q \,dx\,dv$. Integrating using the factor $e^{t/\eps}$ yields the representation
\begin{equation} \label{eq:mild_moment_representation}
    Y_q(t) = Y_q(0) e^{-t/\eps} + \int_0^t \frac{1}{\eps} e^{-(t-s)/\eps} M_q(s) \,ds.
\end{equation}
Following \cite[Proposition 2.1]{zhang2007}, $M_q(s)$ is bounded by the macroscopic density, velocity, and temperature as
\begin{equation*}
    \int \mathcal{M}[f^\eps] |v|^q \,dv \le C_q \rho^\eps (|u^\eps|^q + (T^\eps)^{q/2}).
\end{equation*}
For $q > 2$, applying moment interpolation (see e.g. \cite{perthame1989, zhang2007}), the Maxwellian moments $M_q(t)$ can be controlled by the kinetic moments $Y_q(t)$ in $L^1_x$, ensuring $\sup_{s \in [0,T]} M_q(s) \le C_\theta + \theta Y_q (s)$ independent of $\eps$ for some constant $\theta \in (0,1)$. Since
\begin{equation*}
    \int_0^t \frac{1}{\eps} e^{-(t-s)/\eps} \,ds \le 1,
\end{equation*} 
substituting in \eqref{eq:mild_moment_representation} gives
\begin{equation*}
    Y_q(t) \le Y_q(0) e^{-t/\eps} + \left(1 - e^{-t/\eps}\right) \left( C_\theta + \theta \sup_{0 \le s \le t} Y_q(s) \right).
\end{equation*}
Taking the supremum over $t \in [0,T]$ yields $\sup_{0 \le t \le T} Y_q(t) \le C_T$ uniformly in $\eps > 0$, which completes the proof.
\end{proof}

\subsection{Macroscopic Spatial Regularity via Fisher Information}
\label{subsec:spatial_reg}

The goal is transferring the results on the uniform bound \eqref{eq:fisher_bound} on $\nabla_x\sqrt{f^\eps}$ to the macroscopic moments. Indeed, since there is no regularity in $v$, the Rellich-Kondrachov Theorem \cite[Section 5.7]{evans1998} cannot be applied directly to $f^\eps$ in phase space.

\begin{prop}
\label{prop:spatial_regularity_fisher_bounds}
Let $\psi(v)$ be a polynomial with $|\psi(v)| \le C(1+|v|^q)$ for some $q \ge 0$. Assume the moments of $f_0^\eps$ up to order $2q$ are uniformly bounded in $L^1(\R^d \times \R^d)$ and that $f_0^\eps$ has finite entropy $\mathcal{H}(f_0^\eps) < \infty$. Then
\begin{equation}
\label{eq:pointwise_gradient_bound}
    |\nabla_x \mu_\psi^\eps(t,x)|
    \le 2\!\left(\int_{\R^d}|\psi(v)|^2 f^\eps\, dv\right)^{\!1/2}
         \!\left(\int_{\R^d}|\nabla_x\sqrt{f^\eps}|^2\, dv\right)^{\!1/2},
\end{equation}
where $\mu_\psi^\eps(t,x) := \int_{\R^d}\psi(v)f^\eps(t,x,v)\, dv$. In particular,
\begin{equation}
\label{eq:moment_gradients_L2L1}
    \nabla_x\rho^\eps,\quad
    \nabla_x(\rho^\eps u^\eps),\quad
    \nabla_x(\rho^\eps E^\eps)
    \;\in\; L^2(0,T;\, L^1(\R^d)).
\end{equation}
\end{prop}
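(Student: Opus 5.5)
The plan is to prove the pointwise bound \eqref{eq:pointwise_gradient_bound} first, by differentiating under the integral sign and writing $\nabla_x f^\eps = 2\sqrt{f^\eps}\,\nabla_x\sqrt{f^\eps}$, as in the proof of Proposition \ref{prop:H_Theorem}. Formally, for a.e. $(t,x)$,
\begin{equation*}
    \nabla_x \mu_\psi^\eps(t,x) = \int_{\R^d}\psi(v)\,\nabla_x f^\eps\, dv = 2\int_{\R^d}\psi(v)\sqrt{f^\eps}\,\nabla_x\sqrt{f^\eps}\, dv.
\end{equation*}
Cauchy--Schwarz in $v$ then gives \eqref{eq:pointwise_gradient_bound}.

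To justify the differentiation under the integral, I would argue in the distributional sense. Test $\mu_\psi^\eps$ against $\varphi\in C_c^\infty(\R^d)$ and use Fubini, which requires $\psi f^\eps\in L^1_{\mathrm{loc}}$. This holds by Proposition \ref{prop:propagation_moments} with exponent $q$, and in fact $2q$ is available. Then integrate by parts in $x$ for a.e. $v$, using $\sqrt{f^\eps}(\cdot,v)\in H^1_{\mathrm{loc}}$ for a.e. $v$ (a consequence of \eqref{eq:fisher_bound}) and the chain rule for $(\sqrt{f})^2$ in Sobolev spaces. The right-hand side of \eqref{eq:pointwise_gradient_bound} is measurable and, as shown next, integrable in $x$. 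So the distributional gradient is an $L^1_{\mathrm{loc}}$ function and equals the displayed integral. I expect this justification to be the main obstacle, because one needs $\psi\sqrt{f^\eps}\,\nabla_x\sqrt{f^\eps}\in L^1_{x,v}$ to apply Fubini. This follows from Cauchy--Schwarz in $(x,v)$ using the $2q$ moment bound and the Fisher bound, and if needed one can first truncate $\psi$ and pass to the limit by dominated convergence.

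For \eqref{eq:moment_gradients_L2L1}, integrate \eqref{eq:pointwise_gradient_bound} in $x$ and apply Cauchy--Schwarz in $x$:
\begin{equation*}
    \|\nabla_x\mu_\psi^\eps(t)\|_{L^1_x} \le 2\Big(\int_{\R^d\times\R^d}|\psi|^2 f^\eps\,dx\,dv\Big)^{1/2}\Big(\int_{\R^d\times\R^d}|\nabla_x\sqrt{f^\eps}|^2\,dx\,dv\Big)^{1/2}.
\end{equation*}
Since $|\psi|^2\le C(1+|v|^{2q})$, the first factor is bounded uniformly in $t\in[0,T]$ and $\eps$ by mass conservation \eqref{eq:mass_energy_conservation} and Proposition \ref{prop:propagation_moments} with exponent $2q$. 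Squaring and integrating in time, the second factor is controlled by the Fisher bound \eqref{eq:fisher_bound}, which is finite by the finite initial entropy together with \eqref{eq:H_bounded_below}. This gives $\nabla_x\mu_\psi^\eps\in L^2(0,T;L^1(\R^d))$ uniformly in $\eps$.

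Finally, choosing $\psi=1$, $\psi=v$ (componentwise) and $\psi=|v|^2/2$ yields the claims for $\rho^\eps$, $\rho^\eps u^\eps$ and $\rho^\eps E^\eps$. These choices correspond to $q=0,1,2$, which require moments up to order $4$ of $f_0^\eps$.
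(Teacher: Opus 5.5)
Your proposal is correct and follows the paper's own argument. You write $\nabla_x f^\eps = 2\sqrt{f^\eps}\,\nabla_x\sqrt{f^\eps}$ and apply Cauchy--Schwarz in $v$ for the pointwise bound, then Cauchy--Schwarz in $x$, with Proposition~\ref{prop:propagation_moments} controlling the $2q$-moments in $L^\infty(0,T)$ and the Fisher bound \eqref{eq:fisher_bound} controlling the other factor in $L^2(0,T)$. Your distributional justification of differentiating under the integral (using $\sqrt{f^\eps}(t,\cdot,v)\in H^1$ for a.e.\ $(t,v)$) and your remark that $\psi=|v|^2/2$ requires fourth-order moments are details the paper leaves implicit.
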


\begin{proof}
Using the identity $\nabla_x f^\eps = 2\sqrt{f^\eps}\,\nabla_x\sqrt{f^\eps}$, we apply the Cauchy-Schwarz inequality to obtain
\begin{multline*}
    |\nabla_x \mu_\psi^\eps|
    = \left|\int_{\R^d}\psi(v)\,\nabla_x f^\eps\, dv\right|
    = \left|2\int_{\R^d}\psi(v)\sqrt{f^\eps}\,\nabla_x\sqrt{f^\eps}\, dv\right|
    \\ \le 2\,\left(\int|\psi|^2 f^\eps\, dv\right)^{1/2}\,\,
         \left(\int|\nabla_x\sqrt{f^\eps}|^2\, dv\right)^{1/2}.
\end{multline*}
The first factor requires bounds on the higher-order moments. Integrating \eqref{eq:pointwise_gradient_bound} with respect to $x$ over $\R^d$ and applying the Cauchy-Schwarz inequality in $x$, we obtain
\begin{equation*}
    \|\nabla_x \mu_\psi^\eps\|_{L^1(\R^d)} 
    \le 2 \left(\int_{\R^d \times \R^d}|\psi(v)|^2 f^\eps\, dx\, dv\right)^{\!1/2}
         \left(\int_{\R^d \times \R^d}|\nabla_x\sqrt{f^\eps}|^2\, dx\, dv\right)^{\!1/2}.
\end{equation*}
Since moments of $f^\eps$ are uniformly bounded up to order $2q$ thanks to Proposition \ref{prop:propagation_moments}, the first integral is uniformly bounded in $L^\infty(0,T)$. Taking the $L^2(0,T)$ norm of both sides and applying the Fisher information bound \eqref{eq:fisher_bound} to the second factor, we conclude that $\nabla_x \mu_\psi^\eps \in L^2(0,T; L^1(\R^d))$ uniformly in $\eps$, yielding \eqref{eq:moment_gradients_L2L1}. Thus, we conclude that the macroscopic gradients $\mu_\psi$ are uniformly bounded in $L^1$. It is important to underline that the presence of the viscous term with $\tau >0$ is essential for the proof.
\end{proof}

\subsection{Time Compactness}
\label{sec:time_compact}
Let us integrate the viscous BGK equation \eqref{eq:BGK_viscous} against the collision invariants $\psi = 1, v_i, |v|^2$. Using the definitions \eqref{eq:macroscopic_variables}, we obtain the macroscopic conservation laws
\begin{equation}
\label{eq:macro_system}
    \partial_t U^\eps + \nabla_x\cdot\mathbb{F}^\eps = \tau\Delta_x U^\eps,
\end{equation} 
where $U^\eps = (\rho^\eps, \rho^\eps u^\eps, \rho^\eps E^\eps)^T$ and $\mathbb{F}^\eps$ is the corresponding flux tensor.\\ 
Let us observe that the mass and momentum fluxes are globally bounded by the energy, while the energy flux involves the third-order velocity moment $\int v |v|^2 f^\eps\, dv$. By applying Proposition \ref{prop:propagation_moments}, the uniform bound on the third moment of the initial data $f_0^\eps$ guarantees that the third moment of $f^\eps(t)$ remains bounded locally in space, ensuring that the macroscopic fluxes satisfy $\mathbb{F}^\eps \;\in\; L^1(0,T;\, L^1_\mathrm{loc}(\R^d))$.\\
From Proposition \ref{prop:spatial_regularity_fisher_bounds}, we also have $U^\eps \in L^2(0,T; W^{1,1}_\mathrm{loc})$, which implies that the diffusion term satisfies $\tau\Delta_x U^\eps = \tau\nabla_x\cdot(\nabla_x U^\eps) \in L^2(0,T; W^{-1,1}_\mathrm{loc})$. Since $\nabla_x\cdot\mathbb{F}^\eps \in L^1(0,T; W^{-1,1}_\mathrm{loc})$, we have
\begin{equation*}
    \partial_t U^\eps \;\in\; L^1(0,T;\, W^{-1,1}_\mathrm{loc}(\R^d)).
\end{equation*}

We finally apply the Aubin-Lions-Simon compactness theorem \cite{simon1987} using the continuous embeddings
\begin{equation*}
    W^{1,1}_\mathrm{loc} \;\hookrightarrow\hookrightarrow\; L^1_\mathrm{loc}
    \;\hookrightarrow\; W^{-1,1}_\mathrm{loc}.
\end{equation*}
Since $U^\eps$ is uniformly bounded in $L^2(0,T; W^{1,1}_\mathrm{loc})$ and $\partial_t U^\eps$ is uniformly bounded in $L^1(0,T; W^{-1,1}_\mathrm{loc})$, the family $\{U^\eps\}$ is strongly relatively compact in $L^1_\mathrm{loc}((0,T)\times\R^d)$. 
\begin{prop}[Strong Compactness of Macroscopic Variables]
\label{prop:time_compactness_macro}
Let $\{f^\eps\}_{\eps > 0}$ be a family of non-negative weak solutions to the viscous BGK equation \eqref{eq:BGK_viscous} satisfying the uniform bounds of Proposition~\ref{prop:spatial_regularity_fisher_bounds}. Then, the corresponding sequence of macroscopic variables $U^\eps = (\rho^\eps, \rho^\eps u^\eps, \rho^\eps E^\eps)^T$ is strongly relatively compact in $L^1_\mathrm{loc}((0,T) \times \R^d)$.
\end{prop}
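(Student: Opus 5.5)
The strategy is an Aubin--Lions--Simon argument applied to the moment system \eqref{eq:macro_system}. We produce a uniform spatial bound and a uniform time-derivative bound in suitable local spaces, and then invoke \cite{simon1987}.

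We fix a compact $K\subset\R^d$ and a time horizon $T$, and work on $(0,T)\times K$. For the spatial bound we use Proposition~\ref{prop:spatial_regularity_fisher_bounds} with $\psi\in\{1,v_i,|v|^2/2\}$, so $q\le 2$, which requires moments of $f_0^\eps$ up to order $4$. This gives $\nabla_x U^\eps$ bounded in $L^2(0,T;L^1(\R^d))$ uniformly in $\eps$. Together with mass and energy conservation \eqref{eq:mass_energy_conservation}, which give $U^\eps\in L^\infty(0,T;L^1)$, we obtain that $U^\eps$ is uniformly bounded in $L^2(0,T;W^{1,1}(K))$.

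For the time bound we test \eqref{eq:macro_system} against $\varphi\in W^{1,\infty}_0(K)$, which gives
\begin{equation*}
\langle\partial_t U^\eps,\varphi\rangle=\int \mathbb{F}^\eps\cdot\nabla_x\varphi\,dx-\tau\int\nabla_x U^\eps\cdot\nabla_x\varphi\,dx .
\end{equation*}
The mass and momentum fluxes are controlled by the energy. The energy flux needs $\int|v|^3f^\eps\,dx\,dv$, which is bounded uniformly in $t$ and $\eps$ by Proposition~\ref{prop:propagation_moments} with $q=3$. Hence $\mathbb{F}^\eps\in L^\infty(0,T;L^1)$. The viscous term is in $L^2(0,T;L^1)$ by the previous step. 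It follows that $\partial_tU^\eps$ is uniformly bounded in $L^1(0,T;(W^{1,\infty}_0(K))')$.

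Next we need a chain $X\hookrightarrow\hookrightarrow B\hookrightarrow Y$ to which Simon's theorem applies. We take $X=W^{1,1}(K)$, $B=L^1(K)$ and $Y=(W^{1,\infty}_0(K))'$. The first embedding is compact by Rellich--Kondrachov, since we may take $K$ a ball with Lipschitz boundary. The second is continuous, because $L^1\subset (L^\infty)'\subset(W^{1,\infty}_0)'$.

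Simon's theorem \cite[Cor.~4]{simon1987} then gives relative compactness in $L^p(0,T;B)$ for $p<\infty$ from a bound in $L^p(0,T;X)$ and $\partial_t$ bounded in $L^1(0,T;Y)$. With $p=2$ we get compactness in $L^2(0,T;L^1(K))$, hence in $L^1((0,T)\times K)$. A diagonal extraction over an exhausting sequence of balls $K_n\uparrow\R^d$ yields compactness in $L^1_{\rm loc}((0,T)\times\R^d)$.

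The main obstacle is the choice of the dual space $Y$. $W^{-1,1}$ is not a standard dual, and $L^1$ is not reflexive, so one must check that Simon's version requires no reflexivity; it does not, since it only needs Banach spaces. One must also check that the moment hypotheses actually deliver the third- and fourth-order bounds needed. If the flux only lies in $L^1_{\rm loc}$, the time bound still holds after testing with $\varphi$ supported in $K$.
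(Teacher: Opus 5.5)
Your proposal is correct and follows the same route as the paper. The Fisher-information estimate of Proposition~\ref{prop:spatial_regularity_fisher_bounds} bounds $U^\eps$ in $L^2(0,T;W^{1,1}_{\mathrm{loc}})$, the moment system \eqref{eq:macro_system} with third-order velocity moments bounds $\partial_t U^\eps$ in a negative Sobolev space, and the Aubin--Lions--Simon theorem concludes. Your choice $Y=(W^{1,\infty}_0(K))'$ on balls, followed by a diagonal extraction, is just a more precise version of the paper's chain $W^{1,1}_{\mathrm{loc}}\hookrightarrow\hookrightarrow L^1_{\mathrm{loc}}\hookrightarrow W^{-1,1}_{\mathrm{loc}}$, and you are right that it needs no reflexivity.
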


\subsection{Global Existence}

The main objective of this section is to prove the existence of weak solutions to the viscous BGK equation \eqref{eq:BGK_viscous}. We follow the approach introduced by Perthame \cite{perthame1989, perthame1993}, inspired by \cite{dipernalions1989} and we adapt it to our framework.

\begin{prop}
\label{prop:existence_BGK}
Let $f_0^\eps \ge 0$ satisfy $\int f_0^\eps(1 + |v|^2 + |\log f_0^\eps|)\, dx\, dv < \infty$.
For each fixed $\eps > 0$ and $\tau > 0$, there exists a global weak solution $f^\eps \ge 0$ of
the viscous BGK model \eqref{eq:BGK_viscous} satisfying the conservation of mass and total energy
\eqref{eq:mass_energy_conservation}, and the entropy dissipation inequality \eqref{eq:entropy_inequality}.
\end{prop}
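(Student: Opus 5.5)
We follow Perthame's strategy: construct approximate solutions via a regularized, globally Lipschitz problem; derive the a priori bounds of Subsection \ref{subsec:apriori} uniformly in the regularization; pass to the limit by compactness. The viscous term makes the last step easier than in the classical case, since Proposition \ref{prop:spatial_regularity_fisher_bounds} and Proposition \ref{prop:time_compactness_macro} give strong compactness of the moments directly, without velocity averaging.

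\textbf{Approximation.} For $n\in\mathbb{N}$ we truncate the Maxwellian, replacing $\mathcal{M}[f]$ by $\mathcal{M}_n[f]$. In $\mathcal{M}_n[f]$ we compute $(\rho,u,T)$ from $f$ restricted to $|v|\le n$ and clip them so that $\rho\le n$, $|u|\le n$, $1/n\le T\le n$. We also regularize the data $f^\eps_{0,n}$ (mollified and cut off, converging to $f_0^\eps$ in $L^1((1+|v|^2+|x|^2)\,dx\,dv)$ with converging entropy). The map $f\mapsto \mathcal{M}_n[f]$ is then Lipschitz on $L^1$. We write the equation in mild form using the kinetic semigroup $S(t)$ generated by $-v\cdot\nabla_x+\tau\Delta_x$, which is explicit: free transport composed with the heat kernel in $x$, positivity preserving and an $L^1$ contraction:
\begin{equation*}
f(t)=e^{-t/\eps}S(t)f_{0}+\frac1\eps\int_0^t e^{-(t-s)/\eps}S(t-s)\mathcal{M}_n[f(s)]\,ds .
\end{equation*}
A Banach fixed point in $C([0,T_0];L^1)$, iterated in time, gives a unique global mild solution $f_n\ge0$. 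Nonnegativity follows because the Duhamel formula involves only positive operators acting on nonnegative data.

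\textbf{Uniform bounds.} Heat regularization makes $f_n$ smooth in $x$, so the computations of Proposition \ref{prop:H_Theorem} and \eqref{eq:mass_energy_conservation} can be justified. Approximately, mass is conserved exactly, while energy and entropy hold up to $o(1)$ errors as $n\to\infty$ due to the truncation. The bound on the second spatial moment follows from a Gronwall argument: $\frac{d}{dt}\int|x|^2f=2\int x\cdot vf+2d\tau\int f$. Together with \eqref{eq:H_lower_bound} this gives, uniformly in $n$:
\begin{itemize}
\item $f_n|\log f_n|$ bounded in $L^\infty(0,T;L^1)$;
\item $(1+|v|^2+|x|^2)f_n$ bounded in $L^\infty(0,T;L^1)$;
\item $\nabla_x\sqrt{f_n}$ bounded in $L^2$;
\item $\int_0^T\mathcal{D}_{\mathrm{BGK}}$ bounded.
\end{itemize}
By Dunford--Pettis, $f_n\rightharpoonup f$ weakly in $L^1$. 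Because the first moments $\rho_n,\rho_nu_n$ and the energy density have uniform gradient bounds, the argument of Proposition \ref{prop:time_compactness_macro}, with $\eps$ fixed and $n$ varying, yields strong $L^1_{\mathrm{loc}}$ convergence of $(\rho_n,\rho_nu_n,\rho_n E_n)$. Tightness from the $|v|^2$ and $|x|^2$ moments upgrades this to the global conserved quantities.

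\textbf{Passage to the limit, the main obstacle.} The hard step is the nonlinear term $\mathcal{M}_n[f_n]\to\mathcal{M}[f]$. Strong convergence of $(\rho_n,\rho_nu_n,\rho_nE_n)$ gives a.e. convergence along a subsequence. Hence $(\rho_n,u_n,T_n)\to(\rho,u,T)$ a.e. on $\{\rho>0\}$, and the truncation disappears, since the clipping is eventually inactive wherever these limits are finite and $T>0$. On $\{\rho>0,\,T=0\}$ and on $\{\rho=0\}$, the Maxwellian tends to a Dirac mass or to $0$. Following Perthame, we use the estimate $\mathcal{M}[f]\le C\rho\,T^{-d/2}$ together with the entropy bound $\int\rho\log(\rho T^{-d/2})\lesssim\mathcal{H}(f)$ (Maxwellian entropy minimization) to show $T>0$ a.e. on $\{\rho>0\}$. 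Combined with the bound $\int \mathcal{M}|v|^2\le \int f|v|^2$, this gives equi-integrability of $\mathcal{M}_n[f_n]$ and hence $\mathcal{M}_n[f_n]\to\mathcal{M}[f]$ strongly in $L^1_{\mathrm{loc}}$. The linear terms pass to the limit in the distributional sense. Conservation of mass and energy follows from the moment tightness. The entropy inequality \eqref{eq:entropy_inequality} follows by lower semicontinuity: $\mathcal{H}$ is weakly lsc; $\int|\nabla_x\sqrt{f}|^2$ is convex and lsc, with $\sqrt{f_n}\rightharpoonup\sqrt f$ after using strong compactness in $x$; and $\mathcal{D}_{\mathrm{BGK}}$ is jointly convex in $(f,\mathcal{M})$, hence lsc under the weak convergence of $f_n$ combined with the strong convergence of $\mathcal{M}_n[f_n]$.
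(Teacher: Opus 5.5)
Your proposal follows essentially the same route as the paper's proof. The shared steps are a Perthame-type regularization of the Maxwellian, the Duhamel formula with the transport--heat kernel giving positivity, a Banach fixed point iterated in time, uniform entropy, Fisher and moment bounds with Dunford--Pettis, strong compactness of the moments via Propositions~\ref{prop:spatial_regularity_fisher_bounds} and~\ref{prop:time_compactness_macro}, and strong convergence of the regularized Maxwellian; your $L^1$ fixed-point setting, your handling of the degenerate sets and your lower-semicontinuity derivation of \eqref{eq:entropy_inequality} refine Steps 3--4 rather than change the argument. One caveat applies equally to the paper: the gradient and flux bounds you import for the energy density need $|v|^4$ and $|v|^3$ moments, propagated by Proposition~\ref{prop:propagation_moments}, whereas the stated hypothesis only gives $|v|^2 f_0^\eps\in L^1$, so strictly you should assume $|v|^4 f_0^\eps\in L^1$ as in Theorem~\ref{thm:convergence_viscous_Euler}.
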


\begin{proof}
\noindent\textit{Step 1: Regularization.}
Fix $\delta > 0$. Define the regularized Maxwellian $\mathcal{M}_\delta$ by replacing the temperature with $T_\delta := \max\{T,\delta\}$ and the macroscopic velocity with $u_\delta := u \min(1, \frac{1}{\delta|u|})$. This prevents degeneracy of the Maxwellian, ensuring that all integrands are bounded uniformly, as done in \cite{perthame1989}.

\medskip
\noindent\textit{Step 2: Fundamental solution and Duhamel representation.}
The homogeneous operator $\partial_t + v\cdot\nabla_x - \tau\Delta_x$ describes advection along the characteristics $x(t) = x_0 + vt$, coupled with spatial diffusion having rate $\tau>0$, whose fundamental solution is given by
\begin{equation}
\label{eq:G_tau}
    \Gamma_\tau(t, x, v) := \frac{1}{(4\pi\tau t)^{d/2}}
        \exp\!\left(-\frac{|x - vt|^2}{4\tau t}\right), \qquad t > 0,\; x, v \in \mathbb{R}^d.
\end{equation}
Let us consider $g \in L^\infty(0,T;\, L^1_2 \cap L^2(\mathbb{R}^d\times\mathbb{R}^d))$, where the space $L^1_2$ is weighted by $\langle v \rangle^2 = 1 + |v|^2$. Applying Duhamel's principle to the source term $\mathcal{M}_\delta[g]/\varepsilon$ yields the fixed-point map $\mathcal{T}(g) = f$, expressed as
\begin{multline}
\label{eq:Duhamel_BGK}
    f(t, x, v) = e^{-t/\varepsilon}\!\int_{\mathbb{R}^d} \Gamma_\tau(t, x-y, v)\, f_0^\varepsilon(y, v)\, dy \\ 
    + \frac{1}{\varepsilon}\int_0^t e^{-(t-s)/\varepsilon}\!\int_{\mathbb{R}^d} \Gamma_\tau(t-s, x-y, v)\,\mathcal{M}_\delta[g](s, y, v)\, dy\, ds.
\end{multline}
Since the kernel $\Gamma_\tau \ge 0$, the Maxwellian $\mathcal{M}_\delta \ge 0$, and the initial data $f_0^\varepsilon \ge 0$, this representation ensures that $f \ge 0$, in the same spirit of the result established by Perthame \cite{perthame1989} for the inviscid case.

\medskip
\noindent\textit{Step 3: Fixed-point argument.}
Let
\begin{equation*}
    X := C([0,T]; L^1_2 \cap L^2(\mathbb{R}^d \times \mathbb{R}^d)),
\end{equation*}
endowed with the norm 
$||\cdot||_X = ||\cdot||_{L^1_2} + ||\cdot||_{L^2}$.
As observed in \cite[Proposition 5]{perthame1989}, the regularization provided by $T_\delta$ and $u_\delta$ ensures the Maxwellian mapping is Lipschitz continuous in the weighted space. Since $\mathcal{M}_\delta$ is Lipschitz continuous and the fundamental solution $\Gamma_\tau$ preserves the $L^1_2 \cap L^2$ norm, the operator $\mathcal{T}(g)$ is a contraction map on the space $X$ for small times. Banach's fixed-point theorem \cite[Section 9.2.1]{evans1998} then yields a fixed point $f_\delta^\eps \in X$ satisfying
\eqref{eq:Duhamel_BGK}, with $g = f_\delta^\eps$. Since the Lipschitz bounds depend linearly on conserved quantities, the local in time solution can be iterated over the whole time interval $[0, T]$.

\medskip
\noindent\textit{Step 4: Passage to the limit $\delta \to 0$.}
Since $\{f_\delta^\eps\}_\delta$ satisfies uniform bounds in $\eps$ and $\delta$ and mass and energy are conserved, the family $\{f_\delta^\eps\}$ is equi-integrable by \eqref{eq:entropy_bound}, and the Fisher information \eqref{eq:fisher_bound} is uniformly bounded. By Dunford-Pettis Theorem \cite[Theorem 27-II]{dellacherie1978}, a subsequence of $\{f_\delta^\eps\}$ converges weakly in $L^1_\mathrm{loc}$ to some $f^\eps \ge 0$.\\
To identify the limit of the nonlinear term $\mathcal{M}_\delta[f_\delta^\eps]$, we apply the Aubin-Lions-Simon theorem to the macroscopic moments $U_\delta^\eps = (\rho_\delta^\eps, \rho_\delta^\eps u_\delta^\eps, \rho_\delta^\eps E_\delta^\eps)$. Applying Proposition \ref{prop:spatial_regularity_fisher_bounds} and Proposition~\ref{prop:time_compactness_macro} to $f_\delta^\eps$, we obtain that the sequence $\{U_\delta^\eps\}_\delta$ is strongly relatively compact in $L^1_\mathrm{loc}((0,T) \times \R^d)$. This yields strong convergence, obtaining the macroscopic equation
\begin{equation*}
    \partial_t U_\delta^\eps = -\nabla_x\cdot\mathbb{F}_\delta^\eps + \tau\Delta_x U_\delta^\eps + \mathcal{O}(\delta).
\end{equation*}
Such strong convergence result guarantees the convergence almost everywhere of the macroscopic quantities $(\rho_\delta^\eps, u_\delta^\eps, T_\delta^\eps)$ to $(\rho^\eps, u^\eps, T^\eps)$. We establish that the regularized source term converges strongly to the Maxwellian
\begin{equation*}
    \mathcal{M}_\delta[f_\delta^\eps] \longrightarrow \mathcal{M}[f^\eps] \quad \text{strongly in } L^1_\mathrm{loc}((0,T) \times \R^d \times \R^d).
\end{equation*}
Combining the weak convergence of the linear transport-diffusion terms with the strong convergence of the relaxation operator, we can pass to the limit $\delta \to 0$ in the weak formulation of the regularized problem. We conclude that $f^\eps$ is a global weak solution satisfying the viscous BGK equation \eqref{eq:BGK_viscous}.
\end{proof}

\subsection{Convergence to the Viscous Euler System}

\begin{thm}
\label{thm:convergence_viscous_Euler}
Let $f_0^\eps \ge 0$ satisfy $\int f_0^\eps(1 + |x|^2 + |v|^q + |\log f_0^\eps|)\, dx\, dv < \infty$ for all $q \le 4$. There exists a subsequence such that
\begin{equation}
    (\rho^\eps,\, \rho^\eps u^\eps,\, \rho^\eps E^\eps)
    \;\longrightarrow\; (\rho,\, \rho u,\, \rho E)
    \quad \text{strongly in } L^1_\mathrm{loc}((0,T)\times\R^d).
\end{equation}
Consequently $u^\eps \to u$ and $T^\eps \to T$ almost everywhere on $\{\rho^\eps > 0\}$, and the limit
$(\rho, u, T)$ is a weak solution of the viscous Euler system \eqref{eq:Euler_viscous}.
\end{thm}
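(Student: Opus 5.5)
The proof has three parts: extract strong compactness of the moments, show that $f^\eps$ relaxes to the local Maxwellian, and pass to the limit in the moment system \eqref{eq:macro_system}.

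\textbf{Compactness of the moments.} First take the global weak solutions $f^\eps$ given by Proposition~\ref{prop:existence_BGK}. The moment assumption with $q\le 4$ lets us apply Proposition~\ref{prop:propagation_moments}, so the moments up to order $4$ are bounded uniformly in $\eps$. Then Proposition~\ref{prop:spatial_regularity_fisher_bounds} applies with $\psi\in\{1,v,|v|^2\}$, since $2q\le 4$. Together with the flux bounds of Section~\ref{sec:time_compact}, Proposition~\ref{prop:time_compactness_macro} gives a subsequence with $U^\eps\to U=(\rho,\rho u,\rho E)$ strongly in $L^1_\mathrm{loc}((0,T)\times\R^d)$ and, after a further extraction, almost everywhere. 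On $\{\rho>0\}$ we then get $u^\eps=(\rho^\eps u^\eps)/\rho^\eps\to u$ and
\[
T^\eps=\tfrac{2}{d}\big(\rho^\eps E^\eps-\tfrac12|\rho^\eps u^\eps|^2/\rho^\eps\big)/\rho^\eps\to T
\]
almost everywhere, because these are continuous functions of $U$ away from vacuum. The Dunford--Pettis bound \eqref{eq:entropy_bound} also gives $f^\eps\rightharpoonup f$ weakly in $L^1_\mathrm{loc}$.

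\textbf{Relaxation to equilibrium.} Next I show $f=\mathcal{M}[f]$, with the moments of $\mathcal{M}[f]$ equal to $(\rho,u,T)$. From \eqref{eq:entropy_inequality} and \eqref{eq:H_bounded_below},
\[
\int_0^T\mathcal{D}_{\mathrm{BGK}}\,dt\le C\eps .
\]
I then use the Csiszár--Kullback-type inequality
\[
(a-b)\log(a/b)\ \ge\ \frac{|a-b|^2}{\max(a,b)} .
\]
Integrating it and applying Cauchy--Schwarz gives
\[
\|f^\eps-\mathcal{M}[f^\eps]\|_{L^1}^2\le \mathcal{D}_{\mathrm{BGK}}\cdot\int (f^\eps+\mathcal{M}[f^\eps])\,dx\,dv\le 2M_0\,\mathcal{D}_{\mathrm{BGK}} ,
\]
so $f^\eps-\mathcal{M}[f^\eps]\to0$ in $L^2(0,T;L^1)$, at rate $O(\sqrt\eps)$. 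Since $(\rho^\eps,u^\eps,T^\eps)\to(\rho,u,T)$ almost everywhere, $\mathcal{M}[f^\eps]\to\mathcal{M}_{\rho,u,T}$ almost everywhere. Vacuum regions contribute nothing because the Maxwellian has mass $\rho^\eps\to0$ there. Uniform integrability in $v$ from the energy bound then upgrades this, via Vitali, to strong $L^1_\mathrm{loc}$ convergence. Hence $f^\eps\to\mathcal{M}_{\rho,u,T}$ strongly in $L^1_\mathrm{loc}$, and in particular $f=\mathcal{M}[f]$.

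\textbf{Passing to the limit.} Write \eqref{eq:macro_system} weakly against test functions $\varphi\in C_c^\infty([0,T)\times\R^d)$. The terms $U^\eps\partial_t\varphi$ and $\tau U^\eps\Delta_x\varphi$ converge by the strong convergence of $U^\eps$. The initial term converges provided the moments of $f_0^\eps$ converge, which I will assume or extract. For the fluxes $\int\psi(v)\,v\,f^\eps\,dv$ with $\psi\in\{1,v,|v|^2/2\}$, the weight grows at most like $|v|^3$. The uniform fourth-moment bound gives uniform integrability of $|v|^3f^\eps$ in $v$. Combined with $f^\eps\to\mathcal{M}_{\rho,u,T}$ strongly in $L^1_\mathrm{loc}$, truncating at $|v|\le R$ and letting $R\to\infty$ shows $\int\psi v f^\eps\,dv\to\int\psi v\,\mathcal{M}_{\rho,u,T}\,dv$ in $L^1_\mathrm{loc}$. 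The standard Gaussian moment identities turn the limiting fluxes into $\rho u$, $\rho u\otimes u+\mathrm{p}\,\mathbb{I}$ and $u(\rho E+\mathrm{p})$ with $\mathrm{p}=\rho T$. This yields the weak formulation of \eqref{eq:Euler_viscous}.

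\textbf{Expected main obstacle.} The delicate step is identifying the limit of the energy flux. It is cubic in $v$ and nonlinear in $(\rho,u,T)$, and it degenerates near vacuum. So the fourth-moment tightness must be combined carefully with the entropy-dissipation estimate. If the Csiszár--Kullback route fails to be uniform, I would instead control $|v|^3|f^\eps-\mathcal{M}[f^\eps]|$ on $\{|v|\le R\}$ by the $L^1$ estimate above, and control the tail $\{|v|>R\}$ by $R^{-1}$ times the fourth moments of $f^\eps$ and of $\mathcal{M}[f^\eps]$. The latter is bounded by $C\rho^\eps(|u^\eps|^4+(T^\eps)^2)$, using the moment interpolation of Proposition~\ref{prop:propagation_moments}.
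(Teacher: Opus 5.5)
Your proof is correct and follows the paper's overall architecture: compactness of $U^\eps$ via Propositions \ref{prop:propagation_moments}--\ref{prop:time_compactness_macro}, then the entropy-dissipation bound $\int_0^T\|f^\eps-\mathcal{M}[f^\eps]\|_{L^1_{x,v}}^2\,dt\le 2M_0C\eps$, then fourth moments to control the fluxes. Two steps differ from the paper.

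Replacing Csisz\'ar--Kullback--Pinsker by the elementary bound $(a-b)\log(a/b)\ge |a-b|^2/\max(a,b)$ plus Cauchy--Schwarz in $(x,v)$ is cosmetic. It gives exactly the paper's estimate $\|f^\eps-\mathcal{M}[f^\eps]\|_{L^1_{x,v}}^2\le 2M_0\,\mathcal{D}_{\mathrm{BGK}}$.

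The real difference is how the fluxes are identified. The paper splits $f^\eps=h^\eps+\mathcal{M}[f^\eps]$. It removes $\int|v|^k|h^\eps|\,dv$, $k=2,3$, by H\"older interpolation between $\|h^\eps\|_{L^1}\to0$ and the fourth moment. It then writes the equilibrium fluxes in closed form, $\rho^\eps u^\eps\otimes u^\eps+\rho^\eps T^\eps\mathbb{I}$ and $(\rho^\eps E^\eps+\rho^\eps T^\eps)u^\eps$, and lets them converge with $U^\eps$. You instead first prove that $f^\eps\to\mathcal{M}_{\rho,u,T}$ strongly in $L^1_{\mathrm{loc}}$ of phase space, which the paper only asserts. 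You then pass to the limit in the kinetic fluxes directly, using a velocity cut-off and an $O(1/R)$ tail from the fourth moment. Your fallback in the last paragraph is essentially the paper's route.

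What each buys:
\begin{itemize}
\item The paper's version is quantitative: the non-equilibrium parts are $O(\eps^{1/4})$ and $O(\eps^{1/8})$. It also never needs the local Maxwellians to converge pointwise.
\item However, the paper's last step is not automatic. Getting $L^1_{\mathrm{loc}}$ convergence of nonlinear quantities such as $\rho^\eps|u^\eps|^2u^\eps$ from $U^\eps\to U$ requires the fourth-moment uniform integrability that your truncation makes explicit.
\item Your version, in turn, needs $\mathcal{M}[f^\eps]\to\mathcal{M}_{\rho,u,T}$ almost everywhere. This requires the set $\{\rho>0,\ T=0\}$ to be negligible, and you should say why. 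It follows from the entropy bound: $\int\mathcal{M}[f]\log\mathcal{M}[f]\,dv\le\int f\log f\,dv$, together with the mass, energy and $|x|^2$ bounds, controls $\int\rho^\eps(\log T^\eps)_-\,dx$ uniformly, and Fatou then excludes that set.
\item Likewise, the equi-integrability needed for Vitali comes from this entropy bound. The energy bound only gives tightness in $v$, which is irrelevant for $L^1_{\mathrm{loc}}$.
\end{itemize}
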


\begin{proof}
The strong convergence of the moments follows from the Aubin-Lions-Simon argument in Section \ref{sec:time_compact}. The goal is to identify the limit equation of each term of the weak formulation \eqref{eq:macro_system}.

From the entropy dissipation estimate \eqref{eq:entropy_inequality} and the lower bound \eqref{eq:H_bounded_below}, under the assumption of finite moments and finite entropy for the initial condition, we have
\begin{equation*}
    \frac{1}{\eps}\int_0^T \int_{\R^d} \mathcal{D}_{\mathrm{BGK}}(t,x)\, dx \, dt \le \mathcal{H}(f_0^\eps) + C(M_0, E_0) =: C_0 < \infty.
\end{equation*}
By the Csisz\'ar-Kullback-Pinsker inequality in $v$, we have
\begin{equation*}
    \|f^\eps(t,x,\cdot) - \mathcal{M}[f^\eps](t,x,\cdot)\|_{L^1_v}^2 \le 2\rho^\eps(t,x) \mathcal{D}_{\mathrm{BGK}}(t,x).
\end{equation*}
Integrating in $x$ and applying the Cauchy-Schwarz inequality yields
\begin{align*}
    \|f^\eps(t) - \mathcal{M}[f^\eps](t)\|_{L^1_{x,v}} 
    &\le \sqrt{2} \left( \int_{\R^d} \rho^\eps \,dx \right)^{\!1/2} \left( \int_{\R^d} \mathcal{D}_{\mathrm{BGK}} \,dx \right)^{\!1/2} \\
    &= \sqrt{2 M_0} \left( \int_{\R^d} \mathcal{D}_{\mathrm{BGK}}(t,x) \,dx \right)^{\!1/2}.
\end{align*}
The time integration gives 
\begin{equation*}
    \int_0^T\|f^\eps - \mathcal{M}[f^\eps]\|_{L^1_{x,v}}^2\, dt
    \le 2M_0 \int_0^T \int_{\R^d} \mathcal{D}_{\mathrm{BGK}}\, dx\, dt \le 2M_0 C_0 \eps \to 0.
\end{equation*}
Thus, $f^\eps - \mathcal{M}[f^\eps] \to 0$ strongly in $L^2(0,T; L^1_{x,v})$ and, using compactness result in Proposition \ref{prop:time_compactness_macro}, we get $f^\eps \to \mathcal{M}_{\rho,u,T}$ strongly in $L^1_\mathrm{loc}$.

We must now pass to the limit in the momentum flux $\mathbb{F}_v^\eps := \int v\otimes v\, f^\eps\, dv$ and the energy flux $\mathbb{F}_E^\eps := \int v \frac{|v|^2}{2} f^\eps\, dv$. Let us define the fluctuation $h^\eps = f^\eps - \mathcal{M}[f^\eps]$ and decompose the momentum flux as
\begin{equation*}
    \mathbb{F}_v^\eps = \int v\otimes v\, h^\eps\, dv + \int v\otimes v\,\mathcal{M}[f^\eps]\, dv.
\end{equation*}
For the first term, applying the Hölder inequality yields
\begin{equation*}
    \left\| \int_{\R^d} |v|^2 |h^\eps|\, dv \right\|_{L^1_{t,x}} 
    \le \|h^\eps\|_{L^1_{t,x,v}}^{1/2} \left( \int_0^T \int_{\R^d\times\R^d} |v|^4 |h^\eps|\, dv\, dx\, dt \right)^{\!1/2}.
\end{equation*}
Since the initial condition has a bounded 4th-order moment, the bound is uniformly propagated thanks to Proposition \ref{prop:propagation_moments}.\\
Since $\|h^\eps\|_{L^1} \to 0$, the first term vanishes. Similarly, for the energy flux $\mathbb{F}_E$, we get
\begin{equation*}
    \left\| \int_{\R^d} |v|^3 |h^\eps|\, dv \right\|_{L^1_{t,x}} 
    \le \|h^\eps\|_{L^1_{t,x,v}}^{1/4} \left( \int_0^T \int_{\R^d\times\R^d} |v|^4 |h^\eps|\, dv\, dx\, dt \right)^{\!3/4} \;\longrightarrow\; 0.
\end{equation*}

Therefore, the limits of the fluxes are entirely determined by the Maxwellian functions, which give
\begin{align*}
    \int v\otimes v\,\mathcal{M}[f^\eps]\, dv &= \rho^\eps u^\eps\otimes u^\eps + \rho^\eps T^\eps\,\mathbb{I}, \\
    \int v \frac{|v|^2}{2} \mathcal{M}[f^\eps]\, dv &= \left( \rho^\eps E^\eps + \rho^\eps T^\eps \right) u^\eps,
\end{align*}
both of which converge strongly in $L^1_\mathrm{loc}$ to their respective limits $\rho u\otimes u + \rho T\,\mathbb{I}$ and $(\rho E + \rho T)u$ by the strong convergence of the macroscopic moments proved in Proposition \ref{prop:time_compactness_macro}.\\
Collecting all limits and passing them into the weak formulation of the macroscopic conservation laws \eqref{eq:macro_system}, we identify the limit as the viscous Euler system \eqref{eq:Euler_viscous}.
\end{proof}

\section{Formal hydrodynamic limit: entropy production}
\label{Section:entropy_single}
The main goal of this section is investigating the formal hydrodynamic limit of the viscous BGK model using the Chapman-Enskog expansion. In particular, we would like to show that models derived within the framework of symmetric hyperbolic thermodynamically compatible (SHTC) theory can be obtained as limit of collisional kinetic equations.\\
The idea is to retrieve the Euler equations written in extended set of primitive variables and to automatically derive the entropy production term coming from the viscous regularization, without any further assumption. Unlike the symmetric hyperbolic and thermodynamically compatible (SHTC) theory \cite{godunov1961, ruggeri1981, romenski1998, thomann2023}, where the entropy production term is computed to enforce total energy conservation, in our framework the latter is automatically inherited from the structural properties of the BGK model. Indeed, in SHTC models the energy conservation law is considered as an extra conservation law, obtained via suitable combination of the governing equations.\\
The Euler equations with viscous regularization for the extended set of primitive variables $(\rho, \rho u, \rho s)$ read as follows
\begin{subnumcases}
        \displaystyle \partial_t \rho + \nabla_x \cdot (\rho u) = \tau \Delta_x \rho, \label{eq:viscous_Euler_density}\\
        \partial_t (\rho u) + \nabla_x \cdot (\rho u \otimes u + p \mathbb{I} ) = \tau \Delta_x (\rho u),\label{eq:viscous_Euler_momentum}\\
        \partial_t (\rho s) + \nabla_x \cdot ( \rho u s ) = \tau \Delta_x (\rho s) + \Pi. \label{eq:viscous_Euler_entropy}
\end{subnumcases}
The conservation of mass and the conservation of momentum have been already derived in Section \ref{Section:Convergence_Rigorous}, but we briefly recall the strategy using the Chapman-Enskog approach \cite{chapmancowling1939}. Let us first expand $f^\eps$ around a global Maxwellian function, namely
\begin{equation*}
    f^\eps = \mathcal{M}_{\rho,u,T} + \eps g^\eps.
\end{equation*} 
Now, integrating the viscous BGK equation \eqref{eq:BGK_viscous} against $\psi=1,v$ and using the expressions of the macroscopic fields given in \eqref{eq:macroscopic_variables}, we can truncate the equation at $0$-th order expansion in $\eps$. In particular, we easily obtain the mass conservation equation with viscous correction \eqref{eq:viscous_Euler_density} as
\begin{equation}
    \partial_t \rho + \nabla_x (\rho u) = \tau \Delta_x \rho,
\end{equation}
and the momentum conservation equation with viscous correction  \eqref{eq:viscous_Euler_momentum}, \textit{i.e.}
\begin{equation}
    \partial_t (\rho u) + \nabla_x (\rho u \otimes u + p \mathbf{I}) = \tau \Delta_x (\rho u).
\end{equation}
Let us now focus on the entropy inequality, which can be derived in the form
\begin{multline*}
    \partial_t \langle f^\eps, (-\log f^\eps -1) \rangle + \langle v \cdot \nabla_x f^\eps, (-\log f^\eps -1) \rangle \\ = \langle \mathcal{Q}(f^\eps), (-\log f^\eps -1) \rangle + \tau \langle \Delta_x f^\eps, (-\log f^\eps -1) \rangle,
\end{multline*}
where $\langle \cdot, \cdot \rangle$ denotes the $L^2$ inner product.
This leads to the following equation truncated at the $0$-th order expansion in $\eps$,
\begin{equation*}
    \partial_t (\rho s) + \nabla_x \cdot (\rho s u) = \tau \Delta_x (\rho s) + \tau \int_{\R^d} \frac{|\nabla_x \mathcal{M}_{\rho,u,T}|^2}{\mathcal{M}_{\rho,u,T}} dv.
\end{equation*}
where 
\begin{equation}
    \rho s : = -\int_{\R^d} \mathcal{M}_{\rho,u,T} \log \mathcal{M}_{\rho,u,T}\, dv.
\end{equation}
Note that in compressible flow theory, the thermodynamic entropy has opposite sign compared to the kinetic entropy (or Boltzmann H-functional) given in \eqref{def:H_functional}. \\
Let us finally compute the term 
\begin{equation*}
    \int_{\R^d} \frac{|\nabla_x \mathcal{M}_{\rho,u,T}|^2}{\mathcal{M}_{\rho,u,T}} dv,
\end{equation*}
which corresponds to the Fisher information \eqref{eq:fisher_information} evaluated in $\mathcal{M}_{\rho,u,T}$.\\
In the following we will express the Maxwellian function in terms of the specific entropy $s$ as $\mathcal{M}_{\rho, u, s}$. This change of variables is specifically tailored to exploit the thermodynamic structure of the system. It allows us to compute derivatives directly with respect to the entropy variables by substituting the relation $T=T(\rho, s)$, which is obtained by inverting the macroscopic definition of the entropy. Let us consider
\begin{equation*}
    \nabla_x \mathcal{M}_{\rho,u,s} = \nabla_q \mathcal{M}_{\rho,u,s} \cdot \nabla_x q,
\end{equation*}
where $q=(\rho, \rho u, \rho s)$ is the extended set of primitive variables. Then 
\begin{equation*}
    \frac{|\nabla_x \mathcal{M}_{\rho,u,s}|^2}{\mathcal{M}_{\rho,u,s}} = \frac{|\nabla_q \mathcal{M}_{\rho,u,s} \cdot \nabla_x q|^2}{\mathcal{M}_{\rho,u,s}}.
\end{equation*}
Using $\frac{\nabla_q g}{g} = \nabla_q (\log g)$, we can integrate over the velocity space and rewrite 
\begin{equation*}
    \int_{\R^d} \frac{|\nabla_x \mathcal{M}_{\rho,u,s}|^2}{\mathcal{M}_{\rho,u,s}} = (\nabla_x q)^T \left( \int_{\R^d} \mathcal{M}_{\rho,u,s} \nabla_q \log \mathcal{M}_{\rho,u,s} \otimes \nabla_q \log \mathcal{M}_{\rho,u,s} dv \right) \nabla_x q,
\end{equation*}
which is equivalent to 
\begin{equation*}
    \int_{\R^d} \frac{|\nabla_x \mathcal{M}_{\rho,u,s}|^2}{\mathcal{M}_{\rho,u,s}} = (\nabla_x q)^T H(q) \nabla_x q,
\end{equation*}
where the matrix $H(q)$ is defined as
\begin{equation}
\label{eq:formal_matrixH}
    H(q) : = \int_{\R^d} \mathcal{M}_{\rho,u,s} \nabla_q \log \mathcal{M}_{\rho,u,s} \otimes \nabla_q \log \mathcal{M}_{\rho,u,s} dv.
\end{equation}
In conclusion, we obtain the following equation for the entropy production, at the $0$-th order expansion
\begin{equation}
    \partial_t (\rho s) + \nabla_x \cdot (\rho s u) = \tau \Delta_x (\rho s) + \tau (\nabla_x q)^T H(q) \nabla_x q.
\end{equation}

In the remainder of this section we compare our approach with the SHTC methodology proposed in \cite{thomann2023}. Indeed, they recover as final form for the entropy production term $\Pi$ in \eqref{eq:viscous_Euler_entropy} the following expression
\begin{equation*}
    \Pi=\frac{1}{T} \nabla_q^2 \mathcal{E}.
\end{equation*}
The goal is to show that the two expressions are completely equivalent, since the entropy production term derived from the BGK model automatically provides the conservation of the total energy, without any further assumptions. Let us recall that the expression of the total energy at equilibrium is given by
\begin{equation}
    \mathcal{E} = \frac{1}{2} \int_{\R^d} |v|^2 f^\eps dv = \frac{1}{2} \int_{\R^d} |v|^2 \mathcal{M}_{\rho,u,s} dv.
\end{equation}
Let us compute 
\begin{equation}
    \partial_{q_k} \partial_{q_j} \mathcal{E} = \frac{1}{2} \int_{\R^d} |v|^2 \partial_{q_k} \partial_{q_j} \mathcal{M}_{\rho,u,s} dv, 
\end{equation}
by observing that
\begin{equation}
    |v|^2 = -2T \log \mathcal{M}_{\rho,u,s} + 2T \log \left(\frac{\rho}{(2\pi T)^{d/2}} \right) - |u|^2 + 2 u \cdot v.
\end{equation}
Then, defining $C(q) = T \log(\rho/(2\pi T)^{d/2})-|u|^2/2$ and $B(q)=u$, we obtain
\begin{multline}
    \partial_{q_k} \partial_{q_j} \mathcal{E} = \int_{\R^d} |v|^2 \partial_{q_k} \partial_{q_j} \mathcal{M}_{\rho,u,s} dv \\
    = -T \int_{\R^d} \log \mathcal{M}_{\rho,u,s} \partial_{q_k} \partial_{q_j} \mathcal{M}_{\rho,u,s} dv + C(q) \int_{\R^d} \partial_{q_k} \partial_{q_j} \mathcal{M}_{\rho,u,s} dv + B(q) \int_{\R^d} v \partial_{q_k} \partial_{q_j} \mathcal{M}_{\rho,u,s} dv \\
    = -T \int_{\R^d} \log \mathcal{M}_{\rho,u,s} \partial_{q_k} \partial_{q_j} \mathcal{M}_{\rho,u,s} dv,
\end{multline}
where we have used $\partial_{q_k} \partial_{q_j} (\rho) = 0$ and $\partial_{q_k} \partial_{q_j} (\rho u) = 0$, .\\
Let us recall that
\begin{multline*}
    \log \mathcal{M}_{\rho,u,s} (\partial_{q_k} \partial_{q_j} \mathcal{M}_{\rho,u,s}) \\ = - \partial_{q_j} \mathcal{M}_{\rho,u,s} \partial_{q_k} \log \mathcal{M}_{\rho,u,s} + \partial_{q_k} (\log \mathcal{M}_{\rho,u,s} \partial_{q_j} \mathcal{M}_{\rho,u,s}),
\end{multline*}
and 
\begin{multline*}
    \int_{\R^d}  \log \mathcal{M}_{\rho,u,s} (\partial_{q_k} \partial_{q_j} \mathcal{M}_{\rho,u,s}) dv = - \int_{\R^d} \partial_{q_k} \log \mathcal{M}_{\rho, u, s} \partial_{q_j} \mathcal{M}_{\rho,u,s} dv \\
    = - \int_{\R^d} \mathcal{M}_{\rho,u,s} \partial_{q_k} \log \mathcal{M}_{\rho, u, s} \partial_{q_j} \log \mathcal{M}_{\rho,u,s} dv.
\end{multline*}
Indeed, the integral term
\begin{multline*}
    \int_{\R^d} \partial_{q_j} \mathcal{M}_{\rho,u,s} \partial_{q_k} \log \mathcal{M}_{\rho,u,s} dv = \int_{\R^d} \partial_{q_j} \partial_{q_k} \left[\mathcal{M}_{\rho,u,s} \log \mathcal{M}_{\rho,u,s} -\mathcal{M}_{\rho,u,s} \right] dv \\ = \partial_{q_j} \partial_{q_k} \left[ \rho s - \rho \right] = 0.
\end{multline*}
Finally, we get
\begin{equation*}
    \partial_{q_k} \partial_{q_j} \mathcal{E} = T \int_{\R^d} \mathcal{M}_{\rho,u,s} \left(\partial_{q_k} \log \mathcal{M}_{\rho,u,s} \right)\left(\partial_{q_j} \log \mathcal{M}_{\rho,u,s} \right) dv = T H(q),
\end{equation*}
where $H(q)$ is the matrix defined in \eqref{eq:formal_matrixH}.

\begin{lem}
The matrix $H(q)$ defined in \eqref{eq:formal_matrixH} is positive definite.
\end{lem}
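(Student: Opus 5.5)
The plan is to treat $H(q)$ as a Fisher information matrix for the parametric family $q\mapsto \mathcal{M}_{\rho,u,s}$. Positive semi-definiteness is immediate from the definition \eqref{eq:formal_matrixH}. For any $\xi\in\R^{d+2}$,
\begin{equation*}
    \xi^T H(q)\,\xi = \int_{\R^d} \mathcal{M}_{\rho,u,s}\,\big(\xi\cdot\nabla_q \log \mathcal{M}_{\rho,u,s}\big)^2\, dv \ \ge 0,
\end{equation*}
because $\mathcal{M}_{\rho,u,s}>0$ whenever $\rho>0$ and $T>0$. Definiteness therefore reduces to a single claim: if $\xi\cdot\nabla_q\log\mathcal{M}_{\rho,u,s}=0$ for almost every $v$, then $\xi=0$. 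This is equivalent to showing that the $d+2$ functions $v\mapsto\partial_{q_j}\log\mathcal{M}_{\rho,u,s}$ are linearly independent.

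To verify this, I will write the logarithm explicitly as
\begin{equation*}
    \log\mathcal{M} = \log\rho-\tfrac d2\log(2\pi T)-\frac{|v|^2}{2T}+\frac{u\cdot v}{T}-\frac{|u|^2}{2T}.
\end{equation*}
This is an element of $\mathrm{span}\{1,v_1,\dots,v_d,|v|^2\}$ with coefficients
\begin{equation*}
    a=\log\rho-\tfrac d2\log(2\pi T)-\tfrac{|u|^2}{2T},\qquad b=u/T,\qquad c=-1/(2T).
\end{equation*}
Since $1,v_1,\dots,v_d,|v|^2$ are linearly independent as functions of $v$, the $q$-derivatives $\partial_{q_j}\log\mathcal{M}=\partial_{q_j}a+\partial_{q_j}b\cdot v+\partial_{q_j}c\,|v|^2$ are independent exactly when the Jacobian $\partial(a,b,c)/\partial q$ is invertible. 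I will factor this map as a chain of changes of variables:
\begin{equation*}
    q=(\rho,\rho u,\rho s)\;\mapsto\;(\rho,u,s)\;\mapsto\;(\rho,u,T)\;\mapsto\;(a,b,c).
\end{equation*}
Each step should be a local diffeomorphism.
\begin{enumerate}
    \item The first map has triangular Jacobian with diagonal entries $1,\rho^{-1}\mathbb{I},\rho^{-1}$, so it is invertible for $\rho>0$.
    \item For the second, I will use the explicit relation $s=\tfrac d2\log T-\log\rho+\text{const}$, obtained by computing $\rho s=-\int\mathcal{M}\log\mathcal{M}\,dv$. This gives $\partial T/\partial s=2T/d\neq0$ at fixed $\rho$.
    \item For the third, I will invert directly: $c=-1/(2T)$ determines $T$, then $b$ determines $u=bT$, and finally $a$ determines $\rho$ through $\log\rho$. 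This inverse is smooth, so the Jacobian is invertible; it is triangular with nonzero diagonal entries $\partial c/\partial T=1/(2T^2)$, $\partial b/\partial u=T^{-1}\mathbb{I}$ and $\partial a/\partial\rho=1/\rho$.
\end{enumerate}
Composing the three steps shows that the Jacobian is nonsingular, so the only $\xi$ with $\xi^T H\xi=0$ is $\xi=0$.

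An alternative route, which also serves as a check, is to compute $H$ in the variables $(\rho,u,T)$. There the score components are $1/\rho$, $(v-u)/T$ and $\frac{|v-u|^2}{2T^2}-\frac{d}{2T}$, and their Gaussian moments give the diagonal matrix $\mathrm{diag}(1/\rho,\ \rho/T\,\mathbb{I},\ \rho d/(2T^2))$, after which one transports by the congruence $J^THJ$. The only delicate step is bookkeeping: making sure that $T(\rho,s)$ is correctly inverted and that the Jacobian of $q\mapsto(\rho,u,T)$ has no hidden degeneracy. The argument in the previous paragraph avoids computing that Jacobian explicitly, since it only needs invertibility, and requires only $\rho>0$ and $T>0$, the standing non-vacuum assumption.
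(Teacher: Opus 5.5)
Your proof is correct and follows essentially the same route as the paper: you reduce definiteness of the quadratic form $\xi^T H(q)\,\xi=\int \mathcal{M}\,(\xi\cdot\nabla_q\log\mathcal{M})^2\,dv$ to linear independence of the score components $\partial_{q_j}\log\mathcal{M}$, and you obtain that independence from the collision invariants $1,v,|v|^2$ together with invertibility of the change of variables $q\mapsto(\rho,u,T)$. Your explicit check that $(\rho,u,T)\mapsto(a,b,c)$ has a nonsingular Jacobian, together with the formula $s=\tfrac d2\log T-\log\rho+\mathrm{const}$, supplies the justification that the paper only asserts when it says the components of $\nabla_W\log\mathcal{M}$, with $W=(\rho,u,T)$, are independent because they are combinations of collision invariants.
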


\begin{proof}
Let us first consider the Maxwellian in terms of the primitive variable $W=(\rho,u,T)$ and let us observe that the components of $\nabla_W \log \mathcal{M}$ are linearly independent as functions of the velocity variable $v$, since they are linear combinations of the collision invariants $1, v, |v|^2$.\\
Let $q=(\rho,\rho u,\rho s)$ be the extended set of primitive variables. In what follows, we strictly assume the absence of vacuum states, i.e. $\rho > 0$. Then, the mapping $q \mapsto W$ is a smooth diffeomorphism. Thus, 
\begin{equation}
\label{eq:prop_Hposdef_change}
    \nabla_q \log \mathcal{M} = \left(\frac{\partial W}{\partial q} \right)^T \nabla_W \log \mathcal{M},
\end{equation}
and the Jacobian matrix $J = \left(\tfrac{\partial W}{\partial q} \right)^T $ is invertible.\\
Now, let us consider $z \in \R^{d+2}$ be an arbitrary non-zero vector.
Using \eqref{eq:prop_Hposdef_change}, it is possible to rewrite 
\begin{equation}
    z^T H(q) z = \int_{\R^d} \mathcal{M} ((Jz) \cdot \nabla_W \log \mathcal{M})^2\,dv.
\end{equation}
Let $y=Jz$, which is non null, since $z \neq 0$ and $J$ is invertible. Moreover, the components of $\nabla_W \log \mathcal{M}$ are linearly independent, which implies that the dot product $y \cdot \nabla_W \log \mathcal{M}$ is a non-zero polynomial in $v$. Therefore, its square is strictly positive for almost every $v \in \R^d$ which, combined with the positivity of the Maxwellian function $\mathcal{M}$, yields
\begin{equation}
    \int_{\R^d} \mathcal{M} (y \cdot \nabla_W \log \mathcal{M})^2\,dv > 0.
\end{equation}
This concludes the proof.
\end{proof}

\section{Extension to the multi-species viscous BGK equation}
Several different kinetic models have been proposed in literature to treat gas mixtures, both using BGK-type operators \cite{andries2002, klingenberg2017, haack2017, bobylev2018}, Boltzmann-type operators \cite{chapmancowling1939, briant2016, reytenna2025} and mixed Boltzmann-BGK operators \cite{bisiboscheri2022, bisi2024}. In this work, we focus on a BGK-type model, mainly following \cite{klingenberg2017, haack2017}. \\ 
Consider a mixture of $M$ species, each described by a distribution function $f_p = f_p(t,x,v)$, where the mass of a molecule of component $p$ is denoted by $m_p$. Given $\x \in \Omega \in \R^{d}$, $\uv \in \R^{d}$, the distribution functions $f_p$ evolve according to the multi-species viscous BGK equation, which, in absence of external forces, takes the following form
\begin{equation}
\label{eq:BGK_multi_viscous}
\begin{cases}
    \partial_t f_p^\eps + v \cdot \nabla_x f_p^\eps = \displaystyle \frac{1}{\eps_{pp}} \left(\mathcal{M}_{p}[\f^\eps] - f_p^\eps \right) + \displaystyle \sum_{\substack{q=1\\q \neq p}}^M \frac{1}{\eps_{pq}} \left(\mathcal{M}_{pq}[\f^\eps] - f_p^\eps \right) + \tau \Delta_x f^\eps_p,\\
    f_p^\eps(0,x,v) = f^\eps_{0,p}(x,v),
\end{cases}
\end{equation}
where $\f=(f_1,\dots, f_M)$ and $\tau>0$. The parameter $\eps_{pq} >0$ denotes the relative \textit{Knudsen number} associated with collisions between particles of species $p$ and species $q$. The choice of defining the Knudsen number $\eps_{pq}$ for each type of collision between species $p$ and $q$ allows for a more accurate representation of multi-species kinetic effects \cite{degond1996, bisi2011multi, pupporeytenna2026}, especially in systems where inter-species interactions dominate over intra-species dynamics.\\ 
The formulation of the multispecies BGK operator requires the definition of the mixture Maxwellian functions, namely
\begin{equation}
    \mathcal{M}_{pq}[f_p, f_q] (t,x,v) := n_p \left(\frac{m_p}{2 \pi T_{pq}}\right)^{d/2} \exp \left ( - \frac{m_p |v- \bar{u}_{pq}|^2}{2 T_{pq}} \right ),
\end{equation}
where the macroscopic quantities are defined as suitable moments of the distribution functions $f_p$. In particular, the number density $n_p$, the mass density $\rho_p$, the average macroscopic velocity $\bar{u}_p$, the partial pressure $P_p$ and the temperature $\bar{T}_p$ for each species are computed as
\begin{equation}
\label{eq:macro_multi}
	n_p=\int_{\R^{d}} f_p(\uv) d\uv, \quad \rho_p \bar{u}_p=\int_{\R^{d}} m_p \uv f_p d\uv ,\quad \bar{T}_p=\frac{m_p}{d \, n_p} \int_{\R^{d}} |\uv-\bar{u}_p|^2 f_p d\uv,
\end{equation}
with $\rho_p= m_p\, n_p$ and $P_p= n_p \, \bar{T}_p$. We can also define the total moments for the mixture, given by
\begin{multline}
\label{eq:total_macro_quantities_multi}
	n= \sum_p n_p, \quad \rho= \sum_p \rho_p, \quad \rho\bar{u} = \sum_p \rho\bar{u}_p,\\ P=\sum_p P_p, \quad \bar{T}= \sum_p \frac{m_p}{d \, n_p} \int_{\R^{d}} |\uv-\bar{u}|^2 f_p d\uv.
\end{multline}
The quantities $\bar{u}_{pq}$ and $T_{pq}$ are mixture moments, which can be defined by imposing conservation of total momentum and conservation of total energy. We retrieve \cite{bobylev2018} the following expression for the mixture velocity
\begin{equation}
    \bar{u}_{pq} = \frac{\rho_p \bar{u}_p + \rho_q \bar{u}_q}{\rho_p + \rho_q},
\end{equation}
and the following expression for the mixture temperature
\begin{equation}
\label{eq:temperature_mixture}
    T_{pq} = \frac{n_p T_p + n_q T_q}{n_p + n_q} + \frac{\rho_p (\bar{u}_p^2 - \bar{u}_{pq}^2) + \rho_q (\bar{u}_q^2 - \bar{u}_{qp}^2)}{d (n_p + n_q)}.
\end{equation}
As observed in \cite[Section 2.7]{klingenberg2017}, this choice guarantees $T_{pq} \geq 0$ for all $p, q$. 

\subsection{Entropy Dissipation}
Let us investigate the long time behavior of solutions to the viscous multispecies BGK equation \eqref{eq:BGK_multi_viscous}. For the sake of simplicity and without loss of generality, we set $\eps_{pq}=\eps$ for all $p, q$.

\begin{defi}[Multispecies Boltzmann Entropy]
	Let $\f$ be a solution to the viscous multispecies BGK equation \eqref{eq:BGK_multi_viscous}. The multispecies entropy is given by the $H$-functional
	\begin{equation}
		\mathcal{H}(f) = \sum_{p=1}^M \int_{\R^d \times \R^d} f_p \log f_p \, dx dv.
	\end{equation}
\end{defi}

\begin{prop}[Multispecies Entropy Inequality]
\label{prop:H_Theorem_multi}
The solution $\f^\eps$ to the multispecies viscous BGK equation \eqref{eq:BGK_multi_viscous} where $\eps_{pq} = \eps$ for all $p,q$ satisfies the following entropy dissipation inequality
\begin{multline}
\label{eq:entropy_inequality_multi}
    \mathcal{H}(\f^\eps(t)) + \frac{1}{\eps} \int_0^t \mathcal{D}^{\textnormal{multi}}_{\textnormal{BGK}}(s) ds + 4\tau \sum_{p=1}^M \int_0^t \int_{\R^d \times \R^d} |\nabla_x \sqrt{f_p^\eps}|^2 \, dx dv ds \\ \leq \mathcal{H}(\f_0^\eps),
\end{multline}
where the relaxation dissipation is given by
\begin{multline}
    \mathcal{D}^{\textnormal{multi}}_{\textnormal{BGK}} := \sum_{p=1}^M \Biggl[ \int_{\R^d \times \R^d} (f_p^\eps - \mathcal{M}_{pp}[\f^\eps]) (1+ \log f_p^\eps) \, dx dv \\
    + \sum_{\substack{q=1\\q \neq p}}^M \int_{\R^d \times \R^d} (f_p^\eps - \mathcal{M}_{pq}[\f^\eps]) (1+ \log f_p^\eps) \, dx dv \Biggr]\geq 0.
\end{multline}
\end{prop}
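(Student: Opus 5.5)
The plan is to mimic the single-species argument of Proposition \ref{prop:H_Theorem}, species by species. For each $p$ we multiply the $p$-th equation of \eqref{eq:BGK_multi_viscous} (with $\eps_{pq}=\eps$) by $(1+\log f_p^\eps)$, integrate over $\R^d\times\R^d$ and sum over $p=1,\dots,M$.

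The transport term gives $\nabla_x\cdot(v f_p^\eps\log f_p^\eps)$ for each $p$. It vanishes by the divergence theorem under the decay \eqref{eq:rapidly_decaying}. The viscous term is integrated by parts exactly as in \eqref{eq:fisher_information}. This yields $4\tau\sum_p\int|\nabla_x\sqrt{f_p^\eps}|^2\,dx\,dv$ on the left-hand side. Since $\int f_p\,dx\,dv$ is conserved, $\frac{d}{dt}\int f_p(1+\log f_p)$ reduces to $\frac{d}{dt}\mathcal{H}$. The relaxation terms produce exactly $-\frac1\eps\mathcal{D}^{\mathrm{multi}}_{\mathrm{BGK}}$ as written in the statement. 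Integrating in time gives \eqref{eq:entropy_inequality_multi}, and the inequality holds provided we show $\mathcal{D}^{\mathrm{multi}}_{\mathrm{BGK}}\ge 0$.

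The whole difficulty is the sign of $\mathcal{D}^{\mathrm{multi}}_{\mathrm{BGK}}$. This is the step I expect to be the main obstacle, because, unlike the single-species case, $\log\mathcal{M}_{pq}$ is not built from the moments of $f_p$ alone. I would proceed as follows.
\begin{enumerate}
\item Use convexity of $\phi(z)=z\log z$: $\phi(f)-\phi(\mathcal{M})\le \phi'(f)(f-\mathcal{M})=(1+\log f)(f-\mathcal{M})$. Hence each summand satisfies $\int (f_p-\mathcal{M}_{pq})(1+\log f_p)\,dv \ge \int (f_p\log f_p-\mathcal{M}_{pq}\log \mathcal{M}_{pq})\,dv$.
\item For the intraspecies term $q=p$, the Maxwellian $\mathcal{M}_{pp}$ has the same mass, momentum and energy as $f_p$. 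Since $\log\mathcal{M}_{pp}$ is a collision invariant, the bound becomes $\int f_p\log(f_p/\mathcal{M}_{pp})\ge0$.
\item For the interspecies terms, pair $(p,q)$ with $(q,p)$. I would write $\log\mathcal{M}_{pq}=a_{pq}+b_{pq}\cdot v-\frac{m_p|v|^2}{2T_{pq}}$, noting that $\bar u_{pq}=\bar u_{qp}$ and $T_{pq}=T_{qp}$ share the same $b/T$ structure. Then I would decompose $\int f_p\log f_p-\mathcal{M}_{pq}\log\mathcal{M}_{pq} = \int f_p\log(f_p/\mathcal{M}_{pq}) + \int (f_p-\mathcal{M}_{pq})\log\mathcal{M}_{pq}$.
\item The first piece is a relative entropy and hence nonnegative, since $f_p$ and $\mathcal{M}_{pq}$ share the number density $n_p$. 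The second piece equals $b_{pq}\cdot(\rho\text{-type momentum difference}) - \frac{1}{2T_{pq}}(\text{energy difference})$, suitably weighted by $m_p$.
\item Summing the second pieces over the pair $(p,q),(q,p)$, the definitions of $\bar u_{pq}$ and \eqref{eq:temperature_mixture} make the momentum and energy exchanges cancel. Indeed, they were chosen precisely so that total momentum and energy are conserved in each binary exchange, and the coefficients $\bar u_{pq}/T_{pq}$ and $1/T_{pq}$ are symmetric in $(p,q)$.
\end{enumerate}

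If the exact cancellation in the last step fails because of the $m_p$ weights in the exponent, I would fall back on the standard argument of \cite{klingenberg2017, haack2017}. That argument shows $\sum_{p,q}\int \mathcal{M}_{pq}\log\mathcal{M}_{pq}\le\sum_{p,q}\int \mathcal{M}^{*}\log\mathcal{M}^{*}$ using the minimization property of Maxwellians under moment constraints, and from there obtains nonnegativity of the summed dissipation.

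Finally, I would integrate $\mathcal{D}^{\mathrm{multi}}_{\mathrm{BGK}}\ge0$ in $x$ and in time and combine it with the previous identities to conclude.
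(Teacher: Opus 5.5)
Your proof is correct, but in the interspecies step it takes a different route from the paper. Both arguments start from the convexity inequality $\int(f_p-\mathcal{M}_{pq})(1+\log f_p)\,dv\ge\int(f_p\log f_p-\mathcal{M}_{pq}\log\mathcal{M}_{pq})\,dv$.

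The paper then removes $f_p$ altogether. It uses the entropy-minimization property $\int f_p\log f_p\,dv\ge\int\mathcal{M}_p\log\mathcal{M}_p\,dv$ and evaluates the Maxwellian entropies explicitly, which gives the bound $-\frac d2\,n_p\log(T_{pq}/T_p)$. It closes the pair $(p,q),(q,p)$ with $n_p\log(T_{pq}/T_p)+n_q\log(T_{pq}/T_q)\ge0$. This follows from concavity of $\log$ and $T_{pq}\ge(n_pT_p+n_qT_q)/(n_p+n_q)$, i.e. from the velocity correction in \eqref{eq:temperature_mixture} being nonnegative.

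You instead keep $\mathcal{M}_{pq}$ as the reference measure. You split off the relative entropy $\int f_p\log(f_p/\mathcal{M}_{pq})\,dv\ge0$ and a term linear in the moments of $f_p-\mathcal{M}_{pq}$. Your worry about the $m_p$ weights is unfounded, so the fallback is not needed. Writing $\log\mathcal{M}_{pq}=a_{pq}+\frac{m_p\bar u_{pq}}{T_{pq}}\cdot v-\frac{m_p|v|^2}{2T_{pq}}$, the mass factors attach to the moments. With $E_p=\frac{m_p}{2}\int|v|^2f_p\,dv$ and $E_{pq}=\frac{m_p}{2}\int|v|^2\mathcal{M}_{pq}\,dv$, one gets
\begin{equation*}
\int(f_p-\mathcal{M}_{pq})\log\mathcal{M}_{pq}\,dv=\frac{1}{T_{pq}}\Bigl[\rho_p\,\bar u_{pq}\cdot(\bar u_p-\bar u_{pq})-(E_p-E_{pq})\Bigr].
\end{equation*}
The prefactors here are symmetric in $(p,q)$. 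Adding the $(q,p)$ term therefore gives exactly zero, by the definition of $\bar u_{pq}$ and the energy balance $E_p+E_q=E_{pq}+E_{qp}$ encoded in \eqref{eq:temperature_mixture}. You should write this computation out rather than leave the step conditional.

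What your route buys: it uses only the conservation properties and the symmetry $\bar u_{pq}=\bar u_{qp}$, $T_{pq}=T_{qp}$, not the sign of the velocity correction or concavity. It also yields the quantitative bound $\mathcal{D}^{\textnormal{multi}}_{\textnormal{BGK}}\ge\sum_{p,q}\int f_p\log(f_p/\mathcal{M}_{pq})\,dx\,dv$, with $q=p$ included. That bound could feed a Csisz\'ar--Kullback--Pinsker argument as in Section~\ref{Section:Convergence_Rigorous}. The paper's route gives a coarser bound, but it expresses the interspecies entropy production explicitly in terms of the temperatures.

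A minor point: the time term gives $\frac{d}{dt}\mathcal{H}$ directly by the chain rule $\partial_t(f_p\log f_p)=\partial_t f_p\,(1+\log f_p)$, so mass conservation is not needed there.
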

\begin{proof}
The expression for the time derivative of the functional $\mathcal{H}$ can be obtained by multiplying the multispecies BGK equation by $(1+\log \f)$, where $\log \f = (\log f_p)_{p=1}^M$. The transport term vanishes due to the boundary conditions (periodic or decaying) as in the proof of Proposition \ref{prop:H_Theorem}. The intra-species collision operator is non-positive, since
\begin{equation*}
    \frac{1}{\eps_{pp}} \int (\mathcal{M}_{pp}[f_p^\eps] - f_p^\eps) (1 + \log f_p^\eps) \leq 0,
\end{equation*}
as for the single species case. The inter-species collision is more involved. Using the inequality
\begin{equation*}
    (y-x)(\log x+1) \leq y\log y - x\log x
\end{equation*}
we can estimate
\begin{equation*}
    \int (\mathcal{M}_{pq} - f_p) (\log f_p+1) \leq \int (\mathcal{M}_{pq} \log \mathcal{M}_{pq} - f_p \log f_p).
\end{equation*}
This expression can be further simplified by recalling the classical result in \cite{cercignani1988}, used also in \cite{bisiboscheri2022}, \textit{i.e.}
\begin{equation*}
    \int f_p \log f_p \geq \int \mathcal{M}_p \log \mathcal{M}_p,
\end{equation*}
which leads to
\begin{equation*}
    \int (\mathcal{M}_{pq} - f_p) (\log f_p+1) \leq \int (\mathcal{M}_{pq} \log \mathcal{M}_{pq} - \mathcal{M}_p \log \mathcal{M}_p) = - \frac{d}{2} n_p \log \left(\frac{T_{pq}}{T_q} \right).
\end{equation*}
Since it holds
\begin{equation*}
    \log(T_{pq}) - \log (T_p) \geq  \frac{n_q}{n_p + n_q} \log \left(\frac{T_{pq}}{T_q} \right),
\end{equation*}
we conclude that
\begin{equation*}
    \int (\mathcal{M}_{pq} - f_p) (\log f_p+1) + \int (\mathcal{M}_{qp} - f_q) (\log f_q+1) \leq 0.
\end{equation*}
The viscosity terms lead also in this case to a spatial Fisher-type term
\begin{equation*}
    \tau \sum_{p=1}^M \int \frac{|\nabla_x f_p^\eps|^2}{f_p^\eps}.
\end{equation*}
Finally, we can apply the identity $\displaystyle \frac{\vert{}\nabla_x f_p^\eps\vert{}^2}{f_p^\eps} = 4 \vert{}\nabla_x \sqrt{f_p^\eps}\vert{}^2$, to conclude. The addition of the spatial viscosity introduces a strictly positive dissipation mechanism which enforces the decay of the total entropy, leading to Equation \eqref{eq:entropy_inequality_multi}. 
\end{proof}

\subsection{Macroscopic Quantities for Segregated Species}
The macroscopic quantities defined in \eqref{eq:macro_multi}-\eqref{eq:total_macro_quantities_multi} are valid for situations in which the two species are not well separated. When we deal with segregated species, the  intra-species collisions become more frequent and the corresponding scaling must be properly chosen to correctly capture the physical behavior of particles. To this aim, following \cite{pupporeytenna2026}, we introduce the quantity $\alpha_p$, which describes the volume fraction occupied by the species $p$. Heuristically, to understand the role of this quantity, we can consider a control volume $V$ and define $\chi_p(x,t)$ as the probability of finding a particle of the species $p$ in $(x,t)$. The volume fraction $\alpha_p(x,t)$ occupied by the phase $p$ is then defined as
\begin{equation}
\label{volume_fraction}
    \alpha_p(x,t) = \frac{1}{|V|} \int_V \chi_p(x,t)\, dV,
\end{equation}
where $\sum_p \alpha_p = 1$.  The setting to keep in mind is sketched in Fig. \ref{Fig:interface_separation} for a mixture of two species.
To each point $x$ we associate a control volume $V$, which is large enough to contain several ``bubbles'', but smaller than the typical length scale.
In this setting,
\begin{align*}
	& \alpha_p \rho_p (x,t)= \frac{m_p}{\mu(V)}\int_V \int_{\R^d} \chi_p(x+y)\, f_p(x+y,v,t) d v\, d y, \\
	& \alpha_p\rho_p u_p(x,t) = \frac{m_p}{\mu(V)}\int_V \int_{\R^d} \chi_p(x+y) v f_p(x+y,v,t), \, dv\, d y.
\end{align*}
Further, we suppose that 
\begin{itemize}
	\item $\chi$ moves with the flow: $$\partial_t \chi_p + u_I \partial_x \chi_p = 0,$$
	where $u_I$ is the interface velocity.
	\item macroscopic quantities are almost constant within $V$, except for $\chi_p$.
\end{itemize}
	
\begin{figure}
    \centering
\begin{tikzpicture}[line cap=round,line join=round,>=triangle 45,x=1cm,y=1cm]
	\fill[line width=1.5pt,color=white] (-3,3) -- (-3,-3) -- (3,-3) -- (3,3) -- cycle;
	\draw [line width=1.2pt] (-3,3)-- (-3,-3);
	\draw [line width=1.2pt] (-3,-3)-- (3,-3);
	\draw [line width=1.2pt] (3,-3)-- (3,3);
	\draw [line width=1.2pt] (3,3)-- (-3,3);
	
	% Border / Outer Region Particles
	\draw [rotate around={145.0:(-2.3,-2.15)},line width=1.5pt, fill=black,pattern=north east lines,pattern color=black] (-2.3,-2.15) ellipse (0.46cm and 0.52cm);
	\draw [rotate around={158.0:(-2.19,2.3)},line width=1.5pt, fill=black,pattern=north east lines,pattern color=black] (-2.19,2.3) ellipse (0.42cm and 0.48cm);
	\draw [rotate around={175.4:(2.05,-2.05)},line width=1.5pt, fill=black,pattern=north east lines,pattern color=black] (2.05,-2.05) ellipse (0.45cm and 0.43cm);
	\draw [rotate around={85.1:(2.15,2.3)},line width=1.5pt, fill=black,pattern=north east lines,pattern color=black] (2.15,2.3) ellipse (0.52cm and 0.42cm);
	\draw [rotate around={98.6:(-2.01,0.03)},line width=1.5pt, fill=black,pattern=north east lines,pattern color=black] (-2.01,0.03) ellipse (0.55cm and 0.52cm);
	\draw [rotate around={46.7:(2.0,0.2)},line width=1.5pt, fill=black,pattern=north east lines,pattern color=black] (2.0,0.2) ellipse (0.46cm and 0.54cm);
	\draw [rotate around={121.4:(-0.12,-2.16)},line width=1.5pt, fill=black,pattern=north east lines,pattern color=black] (-0.12,-2.16) ellipse (0.53cm and 0.44cm);
	\draw [rotate around={102.7:(0.06,2.03)},line width=1.5pt, fill=black,pattern=north east lines,pattern color=black] (0.06,2.03) ellipse (0.49cm and 0.44cm);
	\draw [rotate around={27.1:(-1.1,2.12)},line width=1.5pt, fill=black,pattern=north east lines,pattern color=black] (-1.1,2.12) ellipse (0.47cm and 0.48cm);
	\draw [rotate around={114.4:(-1.91,1.15)},line width=1.5pt, fill=black,pattern=north east lines,pattern color=black] (-1.91,1.15) ellipse (0.44cm and 0.44cm);
	\draw [rotate around={139.6:(1.88,-0.85)},line width=1.5pt, fill=black,pattern=north east lines,pattern color=black] (1.88,-0.85) ellipse (0.43cm and 0.43cm);
	\draw [rotate around={55.2:(-1.26,-1.92)},line width=1.5pt, fill=black,pattern=north east lines,pattern color=black] (-1.26,-1.92) ellipse (0.43cm and 0.41cm);
	\draw [rotate around={70.2:(-2.02,-1.13)},line width=1.5pt, fill=black,pattern=north east lines,pattern color=black] (-2.02,-1.13) ellipse (0.42cm and 0.43cm);
	\draw [rotate around={77.1:(1.08,2.08)},line width=1.5pt, fill=black,pattern=north east lines,pattern color=black] (1.08,2.08) ellipse (0.40cm and 0.46cm);
	
	% Interior / Central Particles
	\draw [rotate around={133.7:(0.11,-0.2)},line width=1.5pt, fill=black,pattern=north east lines,pattern color=black] (0.11,-0.2) ellipse (0.44cm and 0.47cm);
	\draw [rotate around={25.6:(0.9,-1.48)},line width=1.5pt, fill=black,pattern=north east lines,pattern color=black] (0.9,-1.48) ellipse (0.50cm and 0.45cm);
	\draw [rotate around={1.0:(-0.73,0.67)},line width=1.5pt, fill=black,pattern=north east lines,pattern color=black] (-0.73,0.67) ellipse (0.51cm and 0.57cm);
	\draw [rotate around={80.5:(-0.92,-0.87)},line width=1.5pt, fill=black,pattern=north east lines,pattern color=black] (-0.92,-0.87) ellipse (0.50cm and 0.51cm);
	\draw [rotate around={28.6:(0.7,1.06)},line width=1.5pt, fill=black,pattern=north east lines,pattern color=black] (0.7,1.06) ellipse (0.54cm and 0.41cm);
	\draw [rotate around={1.8:(1.71,1.31)},line width=1.5pt, fill=black,pattern=north east lines,pattern color=black] (1.71,1.31) ellipse (0.39cm and 0.43cm);
	\draw [rotate around={80.4:(1.08,-0.29)},line width=1.5pt, fill=black,pattern=north east lines,pattern color=black] (1.08,-0.29) ellipse (0.39cm and 0.43cm);
\end{tikzpicture}
	\caption{Diffuse interface gases in a control volume.}
	\label{Fig:interface_separation}
\end{figure}

\section{Formal Hydrodynamic Limit: Two-Phase Flow Model with two entropy inequalities}
In this section we discuss the formal hydrodynamic limit for the two-phase flow model of Romenski \textit{et al.} \cite{romenski2004, romenski2010} with two entropy inequalities from the multispecies viscous BGK model \eqref{eq:BGK_multi_viscous}, with $\eps_{pp}=\eps$ and $\eps_{pq}=1$ for $p \neq q$. For the sake of simplicity, we restrict to the spatial one-dimensional case and to the case of two species.\\ The governing equations for two-phase flows in the formulation given in \cite{romenski2007, thomann2023} consist of flux terms, source terms and artificial viscosity regularization terms (in blue). They read 
\begin{subnumcases}
	\displaystyle \partial_t(\alpha_1) + \bar{u}_1 \partial_x (\alpha_1) =  \textcolor{blue}{\tau \partial_{xx} \alpha_1}, \label{RDT_1}\\[10pt]
	\displaystyle \partial_t(\alpha_p \rho_p) + \partial_x (\alpha_p \bar{u}_p \rho_p) =  \textcolor{blue}{\tau \partial_{xx} (\alpha_p \rho_p)}, \qquad p=1,2 \label{RDT_2}\\[10pt]	
	\displaystyle \partial_t (\rho \bar{u} ) + \partial_x \Big(\alpha_1 \rho_1 \bar{u}_1^2 + \alpha_1 p_1 + \alpha_2 \rho_2 \bar{u}_2^2 + \alpha_2 p_2 \Big) = \textcolor{blue}{\tau \partial_{xx} (\rho \bar{u})}, \label{RDT_3}\\[10pt]
	\displaystyle \partial_t w + \partial_x 
	\left( \frac{u_1^2}{2}-\frac{u_2^2}{2} + \mu_1 - \mu_2 \right) = -\frac{\lambda_0}{\rho} + \textcolor{blue}{\tau \partial_{xx} (w)}, \label{RDT_4}
	\\[10pt]
	\displaystyle \partial_t (\alpha_p \rho_p s_p) + \partial_x 
	\left( \alpha_p \rho_p s_p \bar{u} \right) = \pi_p +  \textcolor{blue}{\tau \partial_{xx} (s)} + \textcolor{blue}{\Pi_p}, \qquad p=1,2 \label{RDT_5}
\end{subnumcases}
where $\alpha_p \in (0,1)$ is the volume fraction of phase $p$, $w=u_1-u_2$ is the relative velocity, $\alpha_p \rho_p s_p$ is the phase-entropy, $\tau>0$ is a given constant, while the partial pressures $p_p$, the chemical potentials $\mu_p$ and the source terms $\lambda_0$, $\Pi_p$ and $\pi_p$ will be defined later. In this framework, the total density $\rho$ is defined as 
\begin{equation}
    \rho = \alpha_1\,\rho_1 + \alpha_2\,\rho_2,
\end{equation}
and the mixture velocity $\bar{u}$ as
\begin{equation}
\label{mixture_velocity}
    \bar{u} = \kappa_1\,\bar{u}_1 + \kappa_2\,\bar{u}_2, \qquad \kappa_p = \frac{\alpha_p\,\rho_p}{\rho}.
\end{equation}

To derive the macroscopic equations without modelling the physical phenomena at interfaces, we adopt a diffuse interface approach at the kinetic level as done in \cite{pupporeytenna2026}. We assume the indicator function $\chi_p(t,x)$ is not a sharp step function, but rather a continuous phase-field that undergoes the same spatial regularization as the kinetic distribution. Therefore, it satisfies the advection-diffusion equation:
\begin{equation}
\label{eq:chi_diffuse}
    \partial_t \chi_p + u_I \partial_x \chi_p = \tau \partial_{xx} \chi_p,
\end{equation}
where $u_I$ is the characteristic velocity of the interface. Let us recall that $\chi_p$ jumps across the interfaces separating the two components, so its derivatives are actually delta functions centered on the interface, see \cite{drewpassman1998}.
% Thus, to close the system, \textcolor{red}{we assume that the kinetic distribution $f_p$ is locally homogeneous across the microscopic thickness of the diffuse interface. In other words, the indicator fraction jumps, but the distribution function does not, since molecular collisions across the microscopic interface ensure that the local fluid velocity and temperature remain continuous. This interfacial assumption implies that $\partial_x f_p \approx 0$ wherever $\partial_x \chi_p \neq 0$, allowing us to neglect the cross-derivative term}, namely
Thus, to close the system, we exploit the scale separation between the microscopic reference volume $V$ (representing the thickness of the diffuse interface) and the macroscopic reference length $L$ of the fluid flow. The kinetic distribution $f_p$ varies smoothly on the macroscopic scale $L$, meaning that its spatial gradients scale as $\partial_x f_p \sim O(1/L)$. Conversely, the indicator function $\chi_p$ varies abruptly over the microscopic interfacial scale. Since the interface thickness is negligible compared to the macroscopic scale $L$, the distribution function $f_p$ can be considered effectively constant across the interfacial region. This scale separation implies that $\partial_x f_p \approx 0$ at the local microscopic scale where $\partial_x \chi_p \neq 0$, allowing us to neglect the cross-derivative term, namely
\begin{equation}
\label{eq:assumption_interface_chi}
    \partial_x \chi_p \partial_x f_p \approx 0.
\end{equation}
In particular, it holds
\begin{align*}
	& \int_V \chi_p \int_{\R^d} \psi(v) \partial_t f_p = \int_V \int_{\R^d} \psi(v) \partial_t (f_p \chi_p)- \int_{V} \int_{\R^d} \psi(v) f_p \partial_t \chi_p 
	\\
	& \int_V \chi_p \int_{\R^d} \psi(v) \partial_x f_p = \int_V \int_{\R^d} \psi(v) \partial_x (f_p \chi_p)- \int_{V} \int_\R \psi(v) f_p \partial_x \chi_p \\
	& \int_V \chi_p \int_{\R^d} \psi(v) \partial_{xx} f_p = \int_V \int_{\R^d} \psi(v) \partial_{xx} (f_p \chi_p) - \int_{V} \int_{\R^d} \psi(v) f_p \partial_{xx} \chi_p.
\end{align*}

Equation \eqref{RDT_2} is obtained by considering $\psi(v)=1$ in the weak formulation of \eqref{eq:BGK_multi_viscous}, which yields the single-species mass conservation. Indeed, supposing macro quantities are approximately constant along the interface, we get
\begin{equation*}
    \partial_t (\alpha_p \rho_p) + \partial_x (\alpha_p \rho_p u_p)- \rho_{I} \int_V \left( \partial_t \chi_p + u_I \partial_x \chi_p - \partial_{xx} \chi_p \right)= \tau \partial_{xx} (\alpha_p \rho_p),
\end{equation*}
where, applying the transport equation \eqref{eq:chi_diffuse} for $\chi_p$, the integral term vanishes.\\

Let us now derive the first equation, which describes the evolution of the volume fraction, as already done in \cite{pupporeytenna2026}. Starting from \eqref{RDT_2}, we have
\begin{multline}
    0=\partial_t (\alpha_1 \rho_1) + \partial_x (\alpha_1 \bar{u}_1 \rho_1) - \tau \partial_{xx} (\alpha_1 \rho_1) \\= \rho_1 \partial_t \alpha_1 + \rho_1 \bar{u}_1 \partial_x \alpha_1 - \tau \partial_{xx} (\alpha_1) - 2 \partial_x \alpha_1 \partial_x \rho_1.
\end{multline}
Dividing the remaining equation by $\rho_1$, we retrieve the evolution equation for the volume fraction
\begin{equation}
    \partial_t \alpha_1 + \bar{u}_1 \partial_x \alpha_1 = \tau \partial_{xx} \alpha_1 + 2 \tau \frac{\partial_x \alpha_1 \partial_x \rho_1}{\rho_1}.
\end{equation}

The cross-derivative term vanishes, due to the assumption \eqref{eq:assumption_interface_chi} made on the structure of the diffuse interface. Since the volume fraction $\alpha_p$ is the macroscopic average of the indicator function $\chi_p$, we obtain
\begin{equation}
    \partial_x \alpha_1 \partial_x \rho_1 \approx 0.
\end{equation}
Applying this closure directly eliminates the cross-derivative term, yielding Equation \eqref{RDT_1}
\begin{equation*}
    \partial_t \alpha_1 + \bar{u}_1 \partial_x \alpha_1 = \tau \partial_{xx} \alpha_1.
\end{equation*}

Let us focus on the phase velocity equation \eqref{RDT_3} and the relative velocity equation \eqref{RDT_4}. Expand the single-species momentum equation as
\begin{equation}
    \rho_p \partial_t \bar{u}_p + \bar{u}_p \left[ \partial_t \rho_p + \partial_x ( \rho_p \bar{u}_p) \right] + \rho_p \bar{u}_p \partial_x \bar{u}_p + \partial_x p_p = R_p,
\end{equation}
where the term in the brackets is zero due to single-species mass conservation and $R_p = \xi_{pq} (u_p - u_q)$ represents the momentum exchange with a constant $\xi_{pq}=\xi_{qp}$ which can be explicitly computed, see  \cite[Appendix A]{pupporeytenna2026}. Summing over the species $p$ we easily retrieve Equation \eqref{RDT_3}. On the other hand, dividing the entire equation by $\rho_p$ yields the non-conservative velocity equation
\begin{equation}
    \partial_t \bar{u}_p + \bar{u}_p \partial_x \bar{u}_p + \frac{1}{\rho_p} \partial_x p_p = \frac{R_p}{\rho_p},
\end{equation}
which can be rewritten as
\begin{equation}
    \partial_t \bar{u}_p + \partial_x \left(\frac{1}{2}\bar{u}_p^2 \right) + \frac{1}{\rho_p} \partial_x p_p = \frac{R_p}{\rho_p}.
\end{equation}
To keep consistency with the model introduced in \cite{romenski2007, thomann2023}, let us define the chemical potential (Gibbs free energy) as
\begin{equation}
    \mu_p := e_p + \frac{p_p}{\rho_p} - s_p T_p .
\end{equation}
Let us investigate the explicit expression of the spatial derivative of this chemical, recalling that the internal energy satisfies $de_p = T_p d_p + \frac{p_p}{\rho_p^2} d\rho_p$, 
\begin{equation}
\label{eq:derivative_gibbs}
    \partial_x \mu_p = \partial_x e_p +\frac{1}{\rho_p} \partial_x p_p - \frac{p_p}{\rho_p^2} \partial_x \rho_p - T_p \partial_x s_p - s_p \partial_x T_p = \frac{1}{\rho_p} \partial_x p_p - s_p \partial_x T_p.
\end{equation}
Then, subtracting the phase-$2$ velocity equation from the phase-$1$ velocity equation yields
\begin{equation}
    \partial_t (\bar{u}_1 - \bar{u}_2) + \partial_x \left( \frac{1}{2} \bar{u}_1^2 - \frac{1}{2} \bar{u}_2^2 \right) + \left( \frac{\partial_x p_1}{\rho_1} - \frac{\partial_x p_2}{\rho_2} \right) = \frac{R_{1,2}}{\rho_1} - \frac{R_{2,1}}{\rho_2}
\end{equation}

Substituting the expression derived in \eqref{eq:derivative_gibbs}, we get
\begin{equation}
    \partial_t w + \partial_x \left( \frac{1}{2} \bar{u}_1^2 - \frac{1}{2} \bar{u}_2^2 \right) + \partial_x \left( \mu_1 - \mu_2 \right) = - \left(s_1 \partial_x T_1 - s_2 \partial_x T_2 \right) + \frac{R_{1,2}}{\rho_1} - \frac{R_{2,1}}{\rho_2},
\end{equation}
where the RHS is equivalent to the source term $-\lambda_0/\rho$ defined in 
\cite[Section 4.4]{romenski2007}.

\begin{rem}
The relative velocity equation \eqref{RDT_4} can be also rewritten following \cite[Equation (3e)]{thomann2023} as 
\begin{equation*}
\displaystyle \partial_t w + \partial_x 
	\left( \bar{u} w + E_c \right) + \bar{u} \partial_x w= -\frac{\lambda_0}{\rho} + \tau \partial_{xx} (w),
\end{equation*}
where $E_c$ is defined as
\begin{equation*}
    E_c := \mu_1 - \mu_2 + (1-2c_1) \frac{w^2}{2}.
\end{equation*}
Indeed, using the expression of the average velocity mixture and recalling that $c_1 + c_2 = 1$, we observe that
\begin{equation*}
    \bar{u}_1 = \bar{u} + c_2 w, \qquad \bar{u}_2 = \bar{u} - c_1 w,
\end{equation*}
which implies
\begin{equation*}
    \bar{u}_1^2 - \bar{u}_2^2 = 2\bar{u}(c_2 w + c_1 w) + (c_2^2-c_1^2) w^2 = 2 \bar{u} w + (1-2c_1)w^2.
\end{equation*}
Substituting this expanded term back in the equation for the relative velocity and grouping the terms under the spatial derivative
\begin{equation}
\label{eq:relative_velocity}
     \partial_t w + \partial_x \left( \bar{u} w + \left[\mu_1 - \mu_2 + (1-2c_1) \frac{w^2}{2}\right] \right) = - \left(s_1 \partial_x T_1 - s_2 \partial_x T_2 \right) + \frac{R_{1,2}}{\rho_1} - \frac{R_{2,1}}{\rho_2},
\end{equation}
where the term in square brackets is exactly $E_c$. Let us study the consistency of the source term with the one appearing in \cite[Equation (3e)]{thomann2023}. The relative velocity relaxation source is given by
\begin{equation}
\label{eq:lambda0}
    \lambda^0 = \chi^0 c_1 c_2 w + \chi_1 c_1 A^1 j_1 + \chi_2 c_2 A^2 j_2,
\end{equation}
where
\begin{equation*}
    \chi_1 = - \frac{\rho s_1}{c_1 \kappa^1}, \qquad \chi_2 = + \frac{\rho s_2}{c_2 \kappa^2},
\end{equation*}
and $j_1$, $j_2$ are independent variables called \textit{thermal impulses}. Substituting them, we get
\begin{equation*}
    \lambda^0 = \chi_0 c_1 c_2 w - \rho s_1 \left( \frac{A^1}{\kappa^1} j_1 \right) + \rho s_2 \left( \frac{A^2}{\kappa^2} j_2 \right)
\end{equation*}
Rather than treating these thermal impulses as independent dynamical variables, we rely on their asymptotic behavior described in \cite[Section 5.2]{Dumbser2017}. Since in the asymptotic limit of the heat flux it holds 
\begin{equation*}
    \partial_x T_p \approx - \frac{A^1}{\kappa^1} j^1,
\end{equation*}
we can substitute $-\partial_x T_1$ and $-\partial_x T_2$ back into \eqref{eq:lambda0}. This gives 
\begin{equation}
    \lambda^0 \approx \chi^0 c_1 c_2 w - \rho s_1 (-\partial_x T_1) + \rho s_2 (-\partial_x T_2)
\end{equation}
and, dividing our result by $-\rho$, yields
\begin{equation}
    -\frac{\lambda^0}{\rho} \approx - \frac{\chi^0 c_1 c_2}{\rho} w - (s_1 \partial_x T_1 - s_2 \partial_x T_2),
\end{equation}
which is the RHS of \eqref{eq:relative_velocity}.
\end{rem}
Let us finally focus on the entropy inequality \eqref{RDT_5}. The derivation of the entropy flux is completely analogous to the single-species case presented in Section \ref{Section:entropy_single}. For this reason, we mainly focus on the source terms: the term $\tau \partial_{xx}(s_p) + \Pi_p$ comes from the viscosity regularization term, which is equivalent to the single-species case, whereas the term $\pi_p$ comes from the inter-species collision term.\\
The terms coming from the viscosity regularization terms can be derived following the same strategy introduced for the single-species case in Section \ref{Section:entropy_single}, which leads to 
\begin{equation}
    \Pi_p = \frac{1}{T_p} \partial^2_q \mathcal{E}_p.
\end{equation}
Let us explicitly compute the inter-species entropy source term for the species $2$. 
\begin{multline}
    \int (\mathcal{M}_{21}-\mathcal{M}_2) \log \mathcal{M}_2 \,dv = \\
     \int \left[ \frac{n_2 m_2^{d/2}}{(2 \pi T_{21})^{d/2}} \exp \left(-\frac{m_2 (\bar{u}_{21}-v)^2}{2T_{21}}\right) - \frac{n_2 m_2^{d/2}}{(2 \pi T_2)^{d/2}} \exp \left(-\frac{m_2 (\bar{u}_{2}-v)^2}{2T_{2}}\right) \right] \\ \times \left[ \log \left( \frac{n_2 m_2^{d/2}}{(2 \pi T_2)^{d/2}}\right)  - \frac{m_2 (\bar{u}_{2}-v)^2}{2T_{2}} \right] \,dv \\ 
     = \frac{m_2}{2T_2} \left[ \left( n_2 \frac{d T_{21}}{m_2} + n_2 (\bar{u}_{21} - \bar{u}_2)^2 \right) - n_2 \frac{d T_2}{m_2} \right] \\
     = n_2 \frac{d}{2} \left( \frac{T_{21}}{T_2} - 1 \right) + n_2 \frac{m_2}{2T_2} (\bar{u}_{21}-\bar{u}_2)^2.
\end{multline}
Concerning the inter-species entropy source term for the species $1$, the expression is analogous with $\cdot_2 \leftrightarrow \cdot_1$ and $\bar{u}_{21} \leftrightarrow \bar{u}_{12}$.
Let us reassemble the expression to retrieve the source term of \cite{thomann2023}. Let us observe that, by conservation of momentum, we have
\begin{equation}
    m_1 \nu_{12} n_1 (\bar{u}_{12}-u_1) + m_2 \nu_{21} n_2 (\bar{u}_{21}-u_2) = 0. 
\end{equation}
Let us assume that $\bar{u}_{21}=\bar{u}_{12}=\bar{u}_{mix}$ and $\rho_1 = m_1 n_1$, $\rho_2 = m_2 n_2$, then
\begin{equation*}
    \rho_1 \nu_{12} (\bar{u}_{mix} - \bar{u}_1)  + \rho_2 \nu_{21} (\bar{u}_{mix} - \bar{u}_2) = 0, 
\end{equation*}
which implies
\begin{equation*}
    \bar{u}_{mix} = \frac{\rho_1 \nu_{12} \bar{u}_1 + \rho_2 \nu_{21} \bar{u}_2}{\rho_1 \nu_{12} + \rho_2 \nu_{21}}.
\end{equation*}
Now, 
\begin{equation*}
    \bar{u}_{21} - \bar{u}_2 = \bar{u}_{mix} - \bar{u}_2 = \frac{\nu_{12} \rho_1}{\rho_1 \nu_{12} + \rho_2 \nu_{21}} (\nu_{12} \bar{u}_1 - \nu_{21} \bar{u}_2)
\end{equation*}
Then, the source term for the entropy production due to the inter-species terms is given by
\begin{equation*}
    n_2 \frac{d}{2} \left(\frac{T_{21}}{T_2}-1 \right) + \frac{\nu_{12} \rho_1 \rho_2}{(\nu_{12} \rho_1+\nu_{21} \rho_2)^2}\left( \frac{\rho_2}{2T_2} \right) w^2,
\end{equation*}
where we observe that $\frac{\rho_1 \rho_2}{(\rho_1+\rho_2)^2} = c_1 c_2$, setting $\nu_{12} = \alpha_1$ and $\nu_{21} = \alpha_2$.\\
According to Equation \eqref{eq:temperature_mixture}, we have
\begin{equation}
    T_{12} = T_{21} =  \underbrace{\frac{n_1 T_1 + n_2 T_2}{n_1 + n_2}}_{\bar{T}} + \kappa w^2.
\end{equation}
Substituting, we obtain
\begin{equation}
    \pi_p = \frac{d}{2} \left[n_2 \left( \frac{\bar{T} + \kappa w^2}{T_2} - 1 \right) \right] + c_1 c_2 \left( \frac{\rho_2}{2T_2} \right) w^2
\end{equation}

Let us collect all the terms containing the relative velocity term, namely
\begin{equation}
    \pi_{2, w} = \frac{w^2}{T_2} \left[ \frac{d}{2} n_2 \kappa + \frac{c_1 c_2 \rho_2}{2} \right] \ge 0.
\end{equation}
Now let us extract the terms containing $\bar{T}$
\begin{equation}
    \pi_{2, \bar{T}} = \frac{d}{2} n_2 \left( \frac{\bar{T}}{T_2} - 1 \right),
\end{equation}
which, using the definition $\bar{T}$, are simplified to 
\begin{equation}
    \pi_{2, \bar{T}} = \frac{d}{2} n_2 \left( \frac{n_1 T_1 - n_1 T_2}{T_2(n_1 + n_2)} \right),
\end{equation}
or, equivalently, to
\begin{equation}
    \pi_{2, \bar{T}} = \frac{d}{2} \left( \frac{n_1 n_2}{n_1 + n_2} \right) \frac{T_1 - T_2}{T_2}.
\end{equation}

Combining the different contributions, the kinetic entropy production for species 2 is
\begin{equation}
    \pi_2 = \frac{d}{2} \left( \frac{n_1 n_2}{n_1 + n_2} \right) \frac{T_1 - T_2}{T_2} + \frac{w^2}{T_2} \left[ \frac{d}{2} n_2 \kappa + \frac{c_1 c_2 \rho_2}{2} \right].
\end{equation}

The single-species entropy, $\pi_p$, is not required to be positive, since the total entropy of the closed system is given by the sum over all species. Therefore, the quantity that must be non-negative is $\sum_p \pi_p$. Let us retrieve the expression for the total mixture entropy production, which is given by
\begin{multline}
\label{eq:entropy_prod_total_mixture}
    \pi_1+\pi_2 = \frac{d}{2} \left( \frac{n_1 n_2}{n_1 + n_2} \right) \left[ \frac{T_2 - T_1}{T_1} + \frac{T_1 - T_2}{T_2} \right] \\+ w^2 \left[ \frac{1}{T_1} \left( \frac{d}{2} n_1 \kappa + \frac{c_1 c_2 \rho_1}{2} \right) + \frac{1}{T_2} \left( \frac{d}{2} n_2 \kappa + \frac{c_1 c_2 \rho_2}{2} \right) \right].
\end{multline}

Let us first factor out $(T_1 - T_2)$ to get
\begin{equation*}
    \left[ \frac{-(T_1 - T_2)}{T_1} + \frac{T_1 - T_2}{T_2} \right] = (T_1 - T_2) \left( \frac{1}{T_2} - \frac{1}{T_1} \right) = \frac{(T_1 - T_2)^2}{T_1 T_2},
\end{equation*}
from which we have 
\begin{equation*}
    \left[ \frac{d \kappa}{2} \left( \frac{n_1}{T_1} + \frac{n_2}{T_2} \right) + \frac{c_1 c_2}{2} \left( \frac{\rho_1}{T_1} + \frac{\rho_2}{T_2} \right) \right]
\end{equation*}
Therefore, substituting back into \eqref{eq:entropy_prod_total_mixture} and defining $\Delta T = T_1 - T_2$, we get the final compact expression
\begin{multline}
    \pi_1+\pi_2 = \underbrace{\left[ \frac{d}{2} \left( \frac{n_1 n_2}{n_1 + n_2} \right) \frac{1}{T_1 T_2} \right]}_{C_{\Delta T} > 0} (\Delta T)^2 \\+ \underbrace{\left[ \frac{d \kappa}{2} \left( \frac{n_1}{T_1} + \frac{n_2}{T_2} \right) + \frac{c_1 c_2}{2} \left( \frac{\rho_1}{T_1} + \frac{\rho_2}{T_2} \right) \right]}_{C_w > 0} w^2
\end{multline}

The derivation explicitly proves that the total entropy production is a linear combination of the squares of the thermodynamic driving forces $\pi_1+\pi_2 \propto (\Delta T)^2 + w^2$, coherently with \cite{romenski2004, romenski2007}. Since $C_{\Delta T}$ and $C_w$ are strictly positive, the total mixture entropy is positive and its Hessian matrix is positive definite, thus convex.

\section{Conclusions}
In this paper, we have investigated the viscous BGK model of the Boltzmann equation and its hydrodynamic limit. From a rigorous point of view, we have proved convergence to the viscous Euler equations, a widely studied system in the context of vanishing viscosity regularization to justify the selection of physically admissible shock waves. In this perspective, the spatial Fisher information bound provides essential control over the kinetic entropy, delivering the structural regularity necessary to derive rigorous a priori estimates and validate the hydrodynamic limit.\\
From a formal point of view, we have established a bridge between kinetic theory and the SHTC framework. In particular, we retrieve the Euler equations with an entropy inequality, where the energy conservation is an extra conservation law. Differently from the SHTC framework, the entropy production term derived from the BGK equation automatically ensures the energy conservation at the macroscopic level. The analysis has been extended for the multispecies BGK model, retrieving in the limit the two-phase flow system with two entropy inequalities proposed in \cite{romenski2010}. Also in this case, the entropy production term can be consistently recovered from the kinetic formulation, without further assumptions.

\section*{Acknowledgement}
The authors received funding from the European Union's Horizon Europe research and innovation program under the Marie Skłodowska-Curie Doctoral Network DataHyking (Grant No. 101072546). GP and TT are members of the INdAM Research National Group of Scientific Computing (INdAM-GNCS).\\

\subsection*{Data Availability} Data sharing not applicable to this article as no datasets were generated or analysed during the current study.

\subsection*{Conflict of interest} No conflict of interest is extant in the present work.

\appendix

\bibliography{References}
\bibliographystyle{acm}

\end{document}